
\NeedsTeXFormat{LaTeX2e}

\documentclass[oneside,11pt,reqno]{conm-p-l}

\usepackage{verbatim,upref,amsxtra,amssymb,amscd,graphicx}

\usepackage{mathtools}

\usepackage{color}

\usepackage[colorlinks=true, linkcolor=blue, citecolor=blue]{hyperref}

\DeclareMathAlphabet\mathscr{U}{eus}{m}{n}
\SetMathAlphabet\mathscr{bold}{U}{eus}{b}{n}
\DeclareMathAlphabet\matheur{U}{eur}{m}{n}
\SetMathAlphabet\matheur{bold}{U}{eur}{b}{n}

\numberwithin{equation}{section}

\newtheorem{theo}{Theorem}[section]
\newtheorem{prop}[theo]{Proposition}
\newtheorem{lemm}[theo]{Lemma}
\newtheorem{coro}[theo]{Corollary}

\theoremstyle{definition}

\newtheorem{defi}[theo]{Definition}

\theoremstyle{remark}

\newtheorem{rema}[theo]{Remark}

\newcommand{\rc}{\mathrm{center}}

\begin{document}\allowdisplaybreaks\frenchspacing

\setlength{\baselineskip}{1.1\baselineskip}

\title{Ergodicity of principal algebraic group actions}

\author{Hanfeng Li}

\author{Jesse Peterson}

\author{Klaus Schmidt}

\address{Hanfeng Li: Department of Mathematics, Chongqing University,
Chong\-qing 401331, China \newline\indent \textup{and} \newline\indent
Department of Mathematics, SUNY at Buffalo,
Buffalo, NY 14260-2900, U.S.A.}
\email{hfli@math.buffalo.edu}

\address{Jesse Peterson: Department of Mathematics, Vanderbilt University, 1326 Stevenson Center, Nashville, TN 37240, U.S.A.} \email{jesse.d.peterson@vanderbilt.edu}

\address{Klaus Schmidt: Mathematics Institute, University of Vienna, Nordberg\-stra{\ss}e 15, A-1090 Vienna, Austria \newline\indent \textup{and} \newline\indent Erwin Schr\"odinger Institute for Mathematical Physics, Boltzmanngasse~9, A-1090 Vienna, Austria} \email{klaus.schmidt@univie.ac.at}

\thanks{H.L. was partially supported by the NSF grants DMS-1001625 and DMS-126623, and he would like to thank the Erwin Schr\"{o}dinger Institute, Vienna, for hospitality and support while some of this work was done,\\\indent J.P. was partially supported by the NSF grant DMS-1201565 and the Alfred P. Sloan Foundation,\\\indent Both J.P. and K.S. would like to thank the University of Buffalo for hospitality and support while some of this work was done.}
\subjclass[2000]{}
\keywords{}

\dedicatory{Dedicated to Shrikrishna Gopalrao Dani on the occasion of his 65th birthday}


\begin{abstract}An \textit{algebraic} action of a discrete group $\Gamma $ is a homomorphism from $\Gamma $ to the group of continuous automorphisms of a compact abelian group $X$. By duality, such an action of $\Gamma $ is determined by a module $M=\widehat{X}$ over the integer group ring $\mathbb{Z}\Gamma $ of $\Gamma $. The simplest examples of such modules are of the form $M=\mathbb{Z}\Gamma /\mathbb{Z}\Gamma f$ with $f\in \mathbb{Z}\Gamma $; the corresponding algebraic action is the \textit{principal algebraic $\Gamma $-action} $\alpha _f$ defined by $f$.

In this note we prove the following extensions of results by Hayes \cite{Hayes} on ergodicity of principal algebraic actions: If $\Gamma $ is a countably infinite discrete group which is not virtually cyclic, and if $f\in\mathbb{Z}\Gamma $ satisfies that right multiplication by $f$ on $\ell ^2(\Gamma ,\mathbb{R})$ is injective, then the principal $\Gamma $-action $\alpha _f$ is ergodic (Theorem \ref{t:ergodic2}). If $\Gamma $ contains a finitely generated subgroup with a single end (e.g. a finitely generated amenable subgroup which is not virtually cyclic), or an infinite nonamenable subgroup with vanishing first $\ell ^2$-Betti number (e.g., an infinite property $T$ subgroup), the injectivity condition on $f$ can be replaced by the weaker hypothesis that $f$ is not a right zero-divisor in $\mathbb{Z}\Gamma $ (Theorem \ref{t:ergodic1}). Finally, if $\Gamma $ is torsion-free, not virtually cyclic, and satisfies Linnell's \textit{analytic zero-divisor conjecture}, then $\alpha _f$ is ergodic for every $f\in \mathbb{Z}\Gamma $ (Remark \ref{r:analytic zero divisor}).\end{abstract}

\maketitle

\section{Principal Algebraic Group Actions}\label{s:introduction}

Let $\Gamma $ be a countably infinite discrete group with integral group ring $\mathbb{Z}\Gamma $. Every $g\in\mathbb{Z}\Gamma $ is written as a formal sum $g =\sum_{\gamma }g_{\gamma }\cdot \gamma $, where $g_{\gamma }\in\mathbb{Z}$ for every $\gamma \in\Gamma $ and $\sum_{\gamma \in\Gamma }|g_{\gamma }|<\infty $. The set $\textup{supp}(g)=\{\gamma \in \Gamma :g_\gamma \ne 0\}$ is called the \textit{support} of $g$. For $g=\sum_{\gamma \in\Gamma }g_\gamma \cdot \gamma \in\mathbb{Z}\Gamma $ we denote by $g^*=\sum_{\gamma \in\Gamma }g_\gamma \cdot \gamma ^{-1}$ the \textit{adjoint} of $g$. The map $g\mapsto g^*$ is an \textit{involution} on $\mathbb{Z}\Gamma $, i.e., $(gh)^*=h^*g^*$ for all $g,h\in\mathbb{Z}\Gamma $, where the product $fg$ of two elements $f=\sum_{\gamma }f_\gamma \cdot \gamma $ and $g=\sum_\gamma g_\gamma \cdot \gamma $ in $\mathbb{Z}\Gamma $ is given by $fg=\sum_{\gamma ,\gamma '\in\Gamma }f_\gamma g_{\gamma '}\cdot \gamma \gamma '$.

An \textit{algebraic $\Gamma $-action} is a homomorphism $\alpha \colon \Gamma \longrightarrow \textup{Aut}(X)$ from $\Gamma $ to the group of (continuous) automorphisms of a compact metrizable abelian group $X$. If $\alpha $ is an algebraic $\Gamma $-action, then $\alpha ^\gamma \in \textup{Aut}(X)$ denotes the image of $\gamma \in \Gamma $, and $\alpha ^{\gamma \gamma '}=\alpha ^\gamma \alpha ^{\gamma '}$ for every $\gamma ,\gamma '\in \Gamma $. The $\Gamma $-action $\alpha $ induces an action of $\mathbb{Z}\Gamma $ by group homomorphisms $\alpha ^f\colon X\longrightarrow X$, where $\alpha ^f=\sum_{\gamma \in\Gamma }f_\gamma \alpha ^\gamma $ for every $f=\sum_{\gamma \in\Gamma }f_\gamma \cdot \gamma \in\mathbb{Z}\Gamma $. Clearly, if $f,g\in\mathbb{Z}\Gamma $, then $\alpha ^{fg}=\alpha ^f \alpha ^g$.

Let $\hat{X}$ be the dual group of $X$. If $\hat{\alpha }^\gamma $ is the automorphism of $\hat{X}$ dual to $\alpha ^\gamma $, then the map $\hat{\alpha }\colon \Gamma \longrightarrow \textup{Aut}(\hat{X})$ satisfies that $\hat{\alpha }^{\gamma \gamma '}=\hat{\alpha }^{\gamma '} \hat{\alpha }^\gamma $ for all $\gamma ,\gamma '\in \Gamma $. We write $\hat{\alpha }^f\colon \hat{X}\longrightarrow \hat{X}$ for the group homomorphism dual to $\alpha ^f$ and set $f\cdot a=\hat{\alpha }^{f^*}a$ for every $f\in\mathbb{Z}\Gamma $ and $a\in \hat{X}$. The resulting map $(f,a)\mapsto f\cdot a$ from $\mathbb{Z}\Gamma \times \hat{X}$ to $\hat{X}$ satisfies that $(fg)\cdot a=f\cdot (g\cdot a)$ for all $f,g\in\mathbb{Z}\Gamma $ and turns $\hat{X}$ into a module over the group ring $\mathbb{Z}\Gamma $. Conversely, if $M$ is a countable module over $\mathbb{Z}\Gamma $, we set $X=\widehat{M}$ and put $\hat{\alpha }^fa=f^*\cdot a$ for $f\in\mathbb{Z}\Gamma $ and $a\in M$. The maps $\alpha ^f\colon \widehat{M}\longrightarrow \widehat{M}$ dual to $\hat{\alpha }^f,\,f\in\mathbb{Z}\Gamma $, define an action of $\mathbb{Z}\Gamma $ by homomorphisms of $\widehat{M}$, which in turn induces an algebraic action $\alpha $ of $\Gamma $ on $X=\widehat{M}$.

The simplest examples of algebraic $\Gamma $-actions arise from $\mathbb{Z}\Gamma $-modules of the form $M=\mathbb{Z}\Gamma /\mathbb{Z}\Gamma f$ with $f\in\mathbb{Z}\Gamma $. Since these actions are determined by principal left ideals of $\mathbb{Z}\Gamma $ they are called \textit{principal algebraic $\Gamma $-actions}. In order to describe these actions more explicitly we put $\mathbb{T}=\mathbb{R}/\mathbb{Z}$ and define the left and right shift-actions $\lambda $ and $\rho $ of $\Gamma $ on $\mathbb{T}^\Gamma $ by setting
	\begin{equation}
	\label{eq:lambda}
(\lambda ^\gamma x)_{\gamma '}=x_{\gamma ^{-1}\gamma '},\qquad (\rho ^\gamma x)_{\gamma '}=x_{\gamma '\gamma },
	\end{equation}
for every $\gamma \in \Gamma $ and $x=(x_{\gamma '})_{\gamma '\in \Gamma }\in\mathbb{T}^\Gamma $. The $\Gamma $-actions $\lambda $ and $\rho $ extend to actions of $\mathbb{Z}\Gamma $ on $\mathbb{T}^\Gamma $ given by
	\begin{equation}
	\label{eq:Lambda}
\lambda ^f=\textstyle\sum_{\gamma \in\Gamma }f_\gamma \lambda ^\gamma ,\qquad \rho ^f=\textstyle\sum_{\gamma \in\Gamma }f_\gamma \rho ^\gamma
	\end{equation}
for every $f= \sum_{\gamma \in\Gamma }f_\gamma \cdot \gamma \in\mathbb{Z}\Gamma $.

\smallskip The pairing $\langle f,x\rangle =e^{2\pi i\sum_{\gamma \in \Gamma }f_\gamma x_\gamma }$, $f=\sum_{\gamma \in \Gamma }f_\gamma \cdot \gamma \in\mathbb{Z}\Gamma $, $x=(x_\gamma )\in\mathbb{T}^\Gamma $, identifies $\mathbb{Z}\Gamma $ with the dual group $\widehat{\mathbb{T}^\Gamma }$ of $\mathbb{T}^\Gamma $. We claim that, under this identification,
	\begin{equation}
	\label{eq:Xf}
X_f\coloneqq\ker \rho ^f = \bigl\{x\in \mathbb{T}^\Gamma :\rho ^fx=\textstyle\sum_{\gamma \in \Gamma }f_\gamma \rho ^\gamma x=0\bigr\}=(\mathbb{Z}\Gamma f)^\perp\subset \widehat{\mathbb{Z}\Gamma }=\mathbb{T}^\Gamma .
	\end{equation}
Indeed,
	\begin{align*}
\langle h,\rho ^fx\rangle &=\smash[t]{\Bigl\langle h,\sum\nolimits_{\gamma '\in \Gamma }f_{\gamma '}\rho ^{\gamma '}x\Bigr\rangle =e^{2\pi i\sum_{\gamma \in \Gamma }h_\gamma \sum_{\gamma '\in \Gamma }f_{\gamma '}x_{\gamma \gamma '}}}
	\\
&=e^{2\pi i\sum_{\gamma \in \Gamma }\sum\nolimits_{\gamma '\in \Gamma }h_{\gamma \gamma '^{-1}}f_{\gamma '}x_\gamma} =e^{2\pi i\sum_{\gamma \in \Gamma }(hf)_\gamma x_\gamma} =\langle hf,x\rangle
	\end{align*}
for every $h\in\mathbb{Z}\Gamma $ and $x\in\mathbb{T}^\Gamma $, so that $x\in\ker \rho ^f$ if and only if $x\in (\mathbb{Z}\Gamma f)^\perp$.

Since the $\Gamma $-actions $\lambda $ and $\rho $ on $\mathbb{T}^\Gamma $ commute, the group $X_f=\ker\rho ^f\subset \mathbb{T}^\Gamma $ is invariant under $\lambda $, and we denote by $\alpha _f$ the restriction of $\lambda $ to $X_f$. In view of this we adopt the following terminology.

	\begin{defi}
	\label{d:principal}
$(X_f,\alpha _f)$ is the principal algebraic $\Gamma $-action defined by $f\in\mathbb{Z}\Gamma $.
	\end{defi}

In \cite{Hayes} the author calls a countably infinite discrete group $\Gamma $ \textit{principally ergodic} if every principal algebraic $\Gamma $-action $\alpha _f,\;f\in \mathbb{Z}\Gamma $, is ergodic w.r.t. Haar measure on $X_f$ and proves that the following classes of groups are principally ergodic: torsion-free nilpotent groups which are not virtually cyclic,\footnote{A discrete group $\Gamma $ is \textit{virtually cyclic} if it has a cyclic finite-index subgroup. Virtually cyclic groups can obviously not be principally ergodic: if $\Gamma =\mathbb{Z}$, and if $\mathbb{Z}\Gamma $ is identified with the ring of Laurent polynomials $\mathbb{Z}[u^{\pm1}]$ in the obvious manner, then the principal algebraic $\mathbb{Z}$-action $\alpha _f$ defined by $f=1-u$ is trivial --- and hence nonergodic --- on $X_f=\mathbb{T}$.} free groups on more than one generator, and groups which are not finitely generated.

In order to state our extensions of these results we denote by $\ell ^\infty (\Gamma ,\mathbb{R})\subset \mathbb{R}^\Gamma $ the space of bounded real-valued maps $v=(v_\gamma )$ on $\Gamma $, where $v_\gamma $ is the value of $v$ at $\gamma $, and we write $\|v\|_\infty =\sup_{\gamma \in\Gamma }|v_\gamma |$ for the supremum norm on $\ell ^\infty (\Gamma ,\mathbb{R})$. For $1\le p <\infty $ we set $\ell ^p(\Gamma ,\mathbb{R})=\{v=(v_\gamma )\in\ell ^\infty (\Gamma ,\mathbb{R}):\|v\|_p=\bigl(\sum_{\gamma \in\Gamma }|v_\gamma |^p\bigr)^{1/p}<\infty \}$. By $\ell ^p(\Gamma ,\mathbb{Z})=\ell ^p(\Gamma ,\mathbb{R})\cap \mathbb{Z}^\Gamma $ we denote the additive subgroup of integer-valued elements of $\ell ^p(\Gamma ,\mathbb{R})$; for $1\le p<\infty $, $\ell ^p(\Gamma ,\mathbb{Z})=\ell ^1(\Gamma ,\mathbb{Z})$ is identified with $\mathbb{Z}\Gamma $ by viewing each $g=\sum_{\gamma }g_\gamma \cdot \gamma \in\mathbb{Z}\Gamma $ as the element $(g_\gamma )_{\gamma \in\Gamma }\in \ell ^1(\Gamma ,\mathbb{Z})$.

The group $\Gamma $ acts on $\ell ^p(\Gamma ,\mathbb{R})$ isometrically by left and right translations: for every $v\in\ell ^p(\Gamma ,\mathbb{R})$ and $\gamma \in\Gamma $ we denote by $\tilde{\lambda }^\gamma v$ and $\tilde{\rho }^\gamma v$ the elements of $\ell ^p(\Gamma ,\mathbb{R})$ satisfying $(\tilde{\lambda }^\gamma v)_{\gamma '}=v_{\gamma ^{-1}\gamma '}$ and $(\tilde{\rho }^\gamma v)_{\gamma '}=v_{\gamma '\gamma }$, respectively, for every $\gamma '\in\Gamma $. Note that $\tilde{\lambda }^{\gamma \gamma '}=\tilde{\lambda }^\gamma \tilde{\lambda }^{\gamma '}$ and $\tilde{\rho }^{\gamma \gamma '}=\tilde{\rho }^\gamma \tilde{\rho }^{\gamma '}$ for every $\gamma ,\gamma '\in\Gamma $.

The $\Gamma $-actions $\tilde{\lambda }$ and $\tilde{\rho }$ extend to actions of $\ell ^1(\Gamma ,\mathbb{R})$ on $\ell ^p(\Gamma ,\mathbb{R})$ which will again be denoted by $\tilde{\lambda }$ and $\tilde{\rho }$: for $h=(h_\gamma )\in \ell ^1(\Gamma ,\mathbb{R})$ and $v\in\ell ^p(\Gamma ,\mathbb{R})$ we set
	\begin{equation}
	\label{eq:representation1}
\tilde{\lambda }^hv=\sum\nolimits_{\gamma \in \Gamma }h_\gamma \tilde{\lambda }^\gamma v,\qquad \tilde{\rho }^hv=\sum\nolimits_{\gamma \in \Gamma }h_\gamma \tilde{\rho }^\gamma v.
	\end{equation}
These definitions correspond to the usual convolutions
	\begin{equation}
	\label{eq:representation2}
\tilde{\lambda }^hv=h\cdot v,\qquad \tilde{\rho }^hv=v\cdot h^*,
	\end{equation}
where $h\mapsto h^*$ is the involution on $\ell ^1(\Gamma ,\mathbb{\mathbb{C}})$ defined as for $\mathbb{Z}\Gamma $: $h^*_\gamma =\overline{h_{\gamma ^{-1}}},\;\gamma \in \Gamma $, for every $h=(h_\gamma )\in \ell ^1(\Gamma ,\mathbb{C})$. For $p=2$, the bounded linear operators $\tilde{\lambda }^h,\tilde{\rho }^h\colon \ell ^2(\Gamma ,\mathbb{R})\longrightarrow \ell ^2(\Gamma ,\mathbb{R})$ in \eqref{eq:representation1} can be viewed as elements of the right (resp. left) equivariant group von Neumann algebra of $\Gamma $.

	\begin{theo}
	\label{t:ergodic1}
Let $\Gamma $ be a countably infinite discrete group which satisfies one of the following conditions:
	\begin{enumerate}
	\item
$\Gamma $ contains a finitely generated amenable subgroup which is not virtually cyclic, or more generally, a finitely generated subgroup with a single end,
	\item
$\Gamma $ is not finitely generated,
	\item
$\Gamma $ contains an infinite property T subgroup, or more generally, a nonamenable subgroup $\Gamma_0$ with vanishing first $\ell^2$-Betti number \linebreak $\beta_1^{(2)}(\Gamma_0) = 0$.
	\end{enumerate}
If $f\in\mathbb{Z}\Gamma $ is not a right zero-divisor, then the principal $\Gamma $-action $\alpha _f$ on $X_f$ is ergodic (with respect to the normalized Haar measure of $X_f$).
	\end{theo}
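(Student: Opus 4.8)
The plan is to reduce ergodicity of $\alpha _f$ to the nonexistence of nonzero elements with finite $\Gamma $-orbit in the dual module $M=\mathbb Z\Gamma /\mathbb Z\Gamma f=\widehat{X_f}$, and then to rule these out by a cohomological argument exploiting that $f$ is not a right zero-divisor. First I would record the standard reformulation: by \eqref{eq:Xf} the $\Gamma $-action on $M$ dual to $\alpha _f$ is left translation, so expanding $L^2(X_f)$ into the characters $\chi _a$ ($a\in M$), noting that the Koopman representation permutes them along the $\Gamma $-orbits in $M$, and observing that square-summability forces the Fourier support of an invariant function to be a union of finite orbits, one gets that $\alpha _f$ is ergodic if and only if $M^\Lambda =0$ for every finite-index subgroup $\Lambda \le \Gamma $ (the stabilizer of an element of $M$ is a subgroup since $\mathbb Z\Gamma f$ is a left ideal). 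So I would assume, toward a contradiction, that $M^\Lambda \neq 0$ for some finite-index $\Lambda $, and fix $a\in \mathbb Z\Gamma \setminus \mathbb Z\Gamma f$ representing a nonzero class in $M^\Lambda $.

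Next, pass to the subgroup $\Gamma _0\le \Gamma $ supplied by the hypothesis and put $\Lambda _0=\Lambda \cap \Gamma _0$, of finite index in $\Gamma _0$ and hence infinite; the class of $a$ still lies in $M^{\Lambda _0}\setminus \{0\}$. Since $f$ is not a right zero-divisor, right multiplication $R_f\colon g\mapsto gf$ is an injective endomorphism of the left $\mathbb Z\Gamma $-module $\mathbb Z\Gamma $, giving a short exact sequence $0\to \mathbb Z\Gamma \xrightarrow{R_f}\mathbb Z\Gamma \to M\to 0$ of left $\mathbb Z\Gamma $-modules, hence also of left $\mathbb Z\Lambda _0$-modules. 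A finitely supported function that is constant on each left $\Lambda _0$-coset in $\Gamma $ must vanish, so $(\mathbb Z\Gamma )^{\Lambda _0}=0$, and the long exact sequence in $H^{\ast }(\Lambda _0,-)$ then yields an embedding $M^{\Lambda _0}\hookrightarrow H^1(\Lambda _0,\mathbb Z\Gamma )$. It therefore suffices to prove $H^1(\Lambda _0,\mathbb Z\Gamma )=0$.

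In case (1), $\Lambda _0$ is a finite-index subgroup of a finitely generated one-ended group, hence is itself finitely generated and one-ended, so $H^1(\Lambda _0,\mathbb Z\Lambda _0)=0$ (the identification of $H^1(\Lambda _0,\mathbb Z\Lambda _0)$ with the reduced zero-dimensional cohomology of the space of ends); since $\mathbb Z\Gamma $ is a direct sum of copies of $\mathbb Z\Lambda _0$ as a left $\mathbb Z\Lambda _0$-module and $H^1(\Lambda _0,-)$ commutes with direct sums ($\Lambda _0$ being finitely generated), $H^1(\Lambda _0,\mathbb Z\Gamma )=0$. In case (3), $\Lambda _0$ is nonamenable and $\beta _1^{(2)}(\Lambda _0)=[\Gamma _0:\Lambda _0]\,\beta _1^{(2)}(\Gamma _0)=0$; here I would compare with $\ell ^2$. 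The inclusion $\mathbb Z\Gamma \hookrightarrow \ell ^2(\Gamma ,\mathbb C)$ induces $H^1(\Lambda _0,\mathbb Z\Gamma )\to H^1(\Lambda _0,\ell ^2(\Gamma ,\mathbb C))$, whose kernel is the image of $\bigl(\ell ^2(\Gamma ,\mathbb C)/\mathbb Z\Gamma \bigr)^{\Lambda _0}$, and I would show this group is $0$: if $\xi \in \ell ^2(\Gamma ,\mathbb C)$ and $(\gamma -1)\xi \in \mathbb Z\Gamma $ for all $\gamma \in \Lambda _0$, then restricting to the left cosets of a finitely generated infinite subgroup of $\Lambda _0$ (which exists since $\Lambda _0$ is nonamenable) and using that deleting a finite set from a locally finite Cayley graph leaves only finitely many components forces $\xi $ to be finitely supported, after which comparing integer coefficients along the cosets of the infinite group $\Lambda _0$ forces $\xi \in \mathbb Z\Gamma $. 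It then remains to check $H^1(\Lambda _0,\ell ^2(\Gamma ,\mathbb C))=0$; but $\ell ^2(\Gamma ,\mathbb C)$, as a unitary $\Lambda _0$-representation under left translation, is an $\ell ^2$-direct sum of copies of the left regular representation of $\Lambda _0$, nonamenability of $\Lambda _0$ makes the associated coboundary operator bounded below, and $\beta _1^{(2)}(\Lambda _0)=0$ makes the reduced first $\ell ^2$-cohomology vanish, so each $1$-cocycle into one copy is a coboundary whose potential has norm controlled by the cocycle; summing these estimates over the summands produces a global potential and hence the vanishing. In either case $H^1(\Lambda _0,\mathbb Z\Gamma )=0$, contradicting $M^{\Lambda _0}\neq 0$. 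Case (2), $\Gamma $ not finitely generated, is Hayes's theorem \cite{Hayes}; the idea there is that the supports of $a$ and $f$ lie in a finitely generated proper subgroup $H\lneq \Gamma $, so choosing $\gamma \in \Lambda $ outside $H$ ``generic'' enough forces, on comparing supports in $(\gamma -1)a\in \mathbb Z\Gamma f$, that $a\in \mathbb Z\Gamma f$.

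The main obstacle is case (3): transporting the algebraic class in $H^1(\Lambda _0,\mathbb Z\Gamma )$ into the analytic setting of $\ell ^2$-cohomology without losing it — that is, establishing the integrality/regularity statement $\bigl(\ell ^2(\Gamma ,\mathbb C)/\mathbb Z\Gamma \bigr)^{\Lambda _0}=0$, and then invoking $\beta _1^{(2)}(\Lambda _0)=0$ correctly, which hinges on using nonamenability to upgrade the vanishing of \emph{reduced} first $\ell ^2$-cohomology to vanishing of the \emph{unreduced} $H^1$ and to control the passage to a countable $\ell ^2$-sum of regular representations. Case (1), by contrast, is a soft consequence of the theory of ends, and case (2) is already known.
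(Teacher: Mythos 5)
Your proposal is correct and follows essentially the same route as the paper: the connecting homomorphism of your long exact sequence for $0\to\mathbb{Z}\Gamma\xrightarrow{\,\cdot f\,}\mathbb{Z}\Gamma\to M\to 0$ is exactly the cocycle \eqref{eq:stability-cocycle}, your case (1) is Proposition \ref{p:end} (Houghton's theorem plus the coset/finite-generation argument), your case (3) rests on the same Bekka--Valette/Peterson--Thom input as Proposition \ref{p:T}, with the paper's approximate-coboundary and Chebyshev-center step (Lemma \ref{l:coboundary2}) replaced by the equivalent device of summing norm-controlled primitives over the cosets, and with the same integrality argument recovering $\mathbb{Z}\Gamma$ from $\ell^2(\Gamma,\mathbb{C})$, while case (2) is Hayes's support argument, which the paper also reproduces. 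The only differences are in packaging (homological-algebra phrasing of the reduction, and vanishing of $H^1(\Lambda_0,\ell^2(\Gamma,\mathbb{C}))$ plus injectivity of the comparison map instead of trivializing the given cocycle directly), not in substance.
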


	\begin{theo}
	\label{t:ergodic2}
Let $\Gamma $ be a countably infinite discrete group which is not virtually cyclic. If $f\in\mathbb{Z}\Gamma $ satisfies that
	\begin{equation}
	\label{eq:kernel}
\ker\tilde{\rho }^{f^*}=\{v\in\ell ^2(\Gamma ,\mathbb{R}):\tilde{\rho }^{f^*}(v) = v\cdot f=0\}=\{0\},
	\end{equation}
then the principal $\Gamma $-action $\alpha _f$ on $X_f$ is ergodic.
	\end{theo}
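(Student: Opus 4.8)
The plan is to prove the contrapositive through the dual (Fourier) description of ergodicity. For any countable $\mathbb Z\Gamma$-module $M$ the algebraic $\Gamma$-action on $\widehat M$ is ergodic for Haar measure precisely when no nonzero element of $M$ has a finite $\Gamma$-orbit; applied to $M=\widehat{X_f}=\mathbb Z\Gamma/\mathbb Z\Gamma f$ with $\Gamma$ acting by left translation, this says $\alpha_f$ is ergodic iff $(\mathbb Z\Gamma/\mathbb Z\Gamma f)^{\Gamma_0}=\{0\}$ for every finite-index subgroup $\Gamma_0\le\Gamma$, and it suffices to test normal ones. So suppose $\alpha_f$ is not ergodic: then there are a finite-index normal subgroup $\Gamma_0\trianglelefteq\Gamma$ and $g\in\mathbb Z\Gamma\setminus\mathbb Z\Gamma f$ with $(\gamma-1)g\in\mathbb Z\Gamma f$ for all $\gamma\in\Gamma_0$; observe that $\Gamma_0$ is again infinite and not virtually cyclic. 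Since \eqref{eq:kernel} forces $f$ not to be a right zero-divisor in $\mathbb Z\Gamma$, there is for each $\gamma\in\Gamma_0$ a unique $q_\gamma\in\mathbb Z\Gamma$ with $(\gamma-1)g=q_\gamma f$.

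Next I would record the analytic consequences of \eqref{eq:kernel}. Put $R_f:=\tilde\rho^{f^*}$, so that $R_fv=v\cdot f$; then $R_f$ and $\tilde\rho^f$ are mutually adjoint elements of the finite von Neumann algebra generated by the right regular representation, the kernel projections of an operator and its adjoint have equal trace there, and so \eqref{eq:kernel} gives $\ker\tilde\rho^f=\{0\}$ as well. Consequently $R_f$ is injective with dense range (indeed $\mathbb Z\Gamma f=(\ker\tilde\rho^f)^\perp$ is already dense in $\ell^2(\Gamma,\mathbb R)$), i.e.\ a weak isomorphism, invertible as an unbounded operator affiliated with that algebra; in particular $f$ is not a left zero-divisor, which makes $q\colon\Gamma_0\to\mathbb Z\Gamma$, $\gamma\mapsto q_\gamma$, a genuine $1$-cocycle for the left-translation module $\mathbb Z\Gamma$ (that is, $q_{\gamma\gamma'}=\gamma q_{\gamma'}+q_\gamma$). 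By construction $R_f\circ q=dg$ is the coboundary of $g$, and $R_f$ commutes with the left-translation $\Gamma$-action.

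The proof then reduces, via a dichotomy, to one statement. Suppose $q$ were a coboundary with an $\ell^2$-primitive, $q_\gamma=\gamma v-v$ with $v\in\ell^2(\Gamma,\mathbb R)$. Then $v\cdot f-g\in\ell^2(\Gamma,\mathbb R)$ is fixed by $\Gamma_0$, hence zero because $\Gamma_0$ is infinite, so $g=v\cdot f$. Moreover $v$ must be integral: along each coset $\Gamma_0\delta$ the values $(v_{\delta'})_{\delta'}$ are all congruent mod $\mathbb Z$ (since $q_\gamma\in\mathbb Z\Gamma$) and tend to $0$ along the infinite coset, hence are eventually $0$ with $v_\delta\in\mathbb Z$; being square-summable, $v$ is then finitely supported, $v\in\mathbb Z\Gamma$, and so $g=v\cdot f\in\mathbb Z\Gamma f$ --- contradicting the choice of $g$. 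Therefore it suffices to prove $(\dagger)$: the cocycle $q$ is a coboundary with an $\ell^2$-primitive, equivalently $g$ lies in the range of $R_f$ inside $\ell^2(\Gamma,\mathbb R)$.

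I expect $(\dagger)$ to be the main obstacle, and it is exactly here that ``not virtually cyclic'' is indispensable: for $\Gamma=\mathbb Z$ and $f=1-u$ one has $g=1$ and $R_f$ injective on $\ell^2(\mathbb Z)$, yet $R_f^{-1}g=\sum_{n\ge0}u^n\notin\ell^2(\mathbb Z)$, so $(\dagger)$ fails and $\alpha_f$ is genuinely non-ergodic. To establish $(\dagger)$ when $\Gamma_0$ is not virtually cyclic I would pass to $\ell^2$-cohomology: as $R_f$ is a weak isomorphism of the Hilbert $\mathcal N\Gamma_0$-module $\ell^2(\Gamma,\mathbb R)$, it induces a weak isomorphism on the reduced cohomology $\overline H^1(\Gamma_0;\ell^2(\Gamma,\mathbb R))$, while $R_f\circ q=dg$ is an honest coboundary, so $q$ already represents $0$ in $\overline H^1$. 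The remaining, genuinely delicate, point is to upgrade this reduced-triviality of the integral cocycle $q$ to an actual $\ell^2$-coboundary, using that a group which is not virtually cyclic (equivalently, not two-ended) cannot carry the ``cyclotomic'' obstruction responsible for the $\Gamma=\mathbb Z$, $f=1-u$ example --- an unbounded, non-coboundary integral cocycle that is nevertheless reduced-trivial. (If $\Gamma$ is not finitely generated, a separate limiting argument over the finitely generated subgroups carrying the supports of $f$ and $g$ should replace this step.) Granting $(\dagger)$, the dichotomy produces a contradiction, and hence $\alpha_f$ is ergodic.
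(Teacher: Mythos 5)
Your reduction is sound and matches the paper's setup: non-ergodicity gives (via the dual criterion) an element $g\notin\mathbb Z\Gamma f$ whose stabilizer $\Gamma_0$ has finite index, condition \eqref{eq:kernel} makes $f$ a non-zero-divisor so that $q_\gamma f=(\gamma-1)g$ defines a genuine cocycle $q\colon\Gamma_0\to\mathbb Z\Gamma$, and your integrality argument (an $\ell^2$-primitive of $q$ is forced to lie in $\mathbb Z\Gamma$, whence $g\in\mathbb Z\Gamma f$) is exactly the paper's final step in Proposition \ref{p:nonamenable}. But the whole content of the theorem is the statement you label $(\dagger)$ and then ``grant'': that $q$ admits an $\ell^2$-primitive. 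You do not prove it, and the route you sketch for it cannot work as stated. Your plan is purely $\ell^2$-cohomological — $q$ is reduced-trivial, and ``not virtually cyclic'' should rule out the obstruction to upgrading reduced-triviality to an honest coboundary. When $\Gamma_0$ is nonamenable this upgrading is true, but for the reason the paper gives, not the one you cite: $\tilde\lambda_{\Gamma_0}$ does not weakly contain the trivial representation, so approximate coboundaries are coboundaries (Lemma \ref{l:coboundary2}, via the Chebyshev-center/affine-action argument), and the approximate-coboundary property of $q$ is extracted by a spectral cutoff of the polar decomposition of $\tilde\rho^{f^*}$ (Lemma \ref{l:nonamenable}) — you instead assert that a weak isomorphism induces a weak isomorphism on $\overline H^1$, which is itself unjustified. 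More seriously, when $\Gamma_0$ is amenable (and not virtually cyclic) the implication ``reduced-trivial $\Rightarrow$ $\ell^2$-coboundary'' is false in general: for an infinite amenable group the trivial representation is weakly contained in the regular one, $\overline H^1$ vanishes while unreduced $H^1(\Gamma_0,\ell^2)$ does not, so there is no purely analytic obstruction of the kind you invoke, and ``not virtually cyclic'' alone gives you nothing at the $\ell^2$ level.

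What actually closes the amenable case in the paper is integral, not analytic, input: if $\Gamma$ is finitely generated, amenable and not virtually cyclic, Stallings' theorem gives a one-ended finitely generated subgroup, and Houghton's theorem $H^1(\Delta,\mathbb Z\Delta)=0$ (Proposition \ref{p:end}) makes the $\mathbb Z\Gamma$-valued cocycle $q$ a coboundary outright; if $\Gamma$ is not finitely generated, a direct support-comparison argument (the proof of Theorem \ref{t:ergodic1}(2)) does the job — your parenthetical ``limiting argument over finitely generated subgroups'' is not a substitute for this. So the proposal has a genuine gap precisely at its central step: the nonamenable half needs the weak-containment argument you did not supply, and the amenable half needs the ends/Houghton (or non-finite-generation) argument, which your $\ell^2$-only strategy cannot reach.
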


In view of the hypotheses on $f$ in the Theorems \ref{t:ergodic1} and \ref{t:ergodic2} it is useful to recall the following result.

	\begin{prop}
	\label{p:divisor}
Let $\Gamma $ be a countably infinite discrete amenable group. For every $f\in\mathbb{Z}\Gamma $ the following conditions are equivalent.
	\begin{enumerate}
	\item
$f$ is a right zero-divisor in $\mathbb{Z}\Gamma $,
    \item
$\{v\in \ell^2(\Gamma, \mathbb{R}): f^*\cdot v=0\}\neq \{0\}$,
	\item
$f$ is a left zero-divisor in $\mathbb{Z}\Gamma $,
    \item
$\{v\in \ell^2(\Gamma, \mathbb{R}): f\cdot v=0\}\neq \{0\}$,
	\item
$\ker\tilde{\rho }^f=\{v\in\ell ^2(\Gamma ,\mathbb{R}):\tilde{\rho }^f(v)=0\}\ne\{0\}$.
	\end{enumerate}
	\end{prop}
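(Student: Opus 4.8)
The plan is to establish $(1)\Rightarrow(2)\Leftrightarrow(4)\Leftrightarrow(5)$ together with $(3)\Rightarrow(4)$ by considerations that do not involve amenability, and then the two substantive implications $(4)\Rightarrow(3)$ and $(4)\Rightarrow(1)$, which do. Throughout one passes freely between $\mathbb{R}$- and $\mathbb{C}$-scalars: since $f$ is real, a complex vector in any of these kernels splits into real and imaginary parts lying in the same kernel, and a homogeneous linear system over $\mathbb{Z}$ has a nonzero complex solution iff it has a nonzero integer one.

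For $(1)\Rightarrow(2)$ and $(3)\Rightarrow(4)$: if $gf=0$ then $f^{*}\cdot g^{*}=(gf)^{*}=0$ with $0\ne g^{*}\in\mathbb{Z}\Gamma\subset\ell^{2}(\Gamma,\mathbb{R})$, and if $fh=0$ then $f\cdot h=0$ with $0\ne h\in\mathbb{Z}\Gamma$. For $(2)\Leftrightarrow(4)$ I would use that $\tilde\lambda^{f^{*}}$ is the Hilbert-space adjoint of $\tilde\lambda^{f}$ and that in the finite von Neumann algebra $L\Gamma$ generated by the left regular representation (with its faithful trace $\tau$) an element is injective on $\ell^{2}(\Gamma)$ iff it has dense range; hence $\ker\tilde\lambda^{f}=\{0\}\Leftrightarrow\ker\tilde\lambda^{f^{*}}=\{0\}$. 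For $(4)\Leftrightarrow(5)$ I would use the isometric involution $J$ of $\ell^{2}(\Gamma,\mathbb{R})$, $(Jv)_{\gamma}=v_{\gamma^{-1}}$, which conjugates $\tilde\lambda^{\gamma}$ to $\tilde\rho^{\gamma}$ for every $\gamma$, so that $J\tilde\lambda^{f}J=\tilde\rho^{f}$ and $\ker\tilde\rho^{f}=J(\ker\tilde\lambda^{f})$. None of this needs amenability, and together with $(4)\Rightarrow(1)$ below it closes all the equivalences.

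The key step is $(4)\Rightarrow(3)$ for amenable $\Gamma$; granting it, $(4)\Rightarrow(1)$ follows by applying it to $f^{*}$: by $(2)\Leftrightarrow(4)$, condition $(4)$ for $f$ gives $\ker\tilde\lambda^{f^{*}}\ne\{0\}$, i.e.\ condition $(4)$ for $f^{*}$, hence $f^{*}$ is a left zero-divisor, say $f^{*}h=0$ with $h\ne0$, and then $h^{*}f=(f^{*}h)^{*}=0$ shows $f$ is a right zero-divisor. To prove $(4)\Rightarrow(3)$ I would argue by contraposition. Assume $f$ is not a left zero-divisor in $\mathbb{Z}\Gamma$, fix a F\o lner sequence $(F_{n})$ for $\Gamma$, and let $P_{n}$ be the orthogonal projection of $\ell^{2}(\Gamma)$ onto $\ell^{2}(F_{n})$. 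Multiplication by $f$ carries $\ell^{2}(F_{n})$ into $\ell^{2}(\textup{supp}(f)F_{n})$ \emph{injectively} --- a nonzero kernel vector, scaled to clear denominators, would give a nonzero $h\in\mathbb{Z}\Gamma$ with $fh=0$ --- so the kernel of the compression $P_{n}\tilde\lambda^{f}P_{n}$ injects into $\ell^{2}\bigl((\textup{supp}(f)F_{n})\setminus F_{n}\bigr)$ and therefore
\[
\dim_{\mathbb{C}}\ker\bigl(P_{n}\tilde\lambda^{f}P_{n}\bigr)\ \le\ \bigl|(\textup{supp}(f)F_{n})\setminus F_{n}\bigr|\ =\ o(|F_{n}|)
\]
by the F\o lner property. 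By the L\"uck-type approximation theorem for amenable groups (Dodziuk--Mathai, Elek), $\dim_{L\Gamma}\ker\tilde\lambda^{f}=\lim_{n}|F_{n}|^{-1}\dim_{\mathbb{C}}\ker(P_{n}\tilde\lambda^{f}P_{n})=0$, and since the kernel projection of $\tilde\lambda^{f}$ lies in $L\Gamma$, faithfulness of $\tau$ forces $\ker\tilde\lambda^{f}=\{0\}$.

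The main obstacle is exactly this last implication: passing from ``$\tilde\lambda^{f}$ is not injective on $\ell^{2}(\Gamma)$'' (equivalently, $f$ is a zero-divisor in $L\Gamma$) back to ``$f$ is a zero-divisor in the much smaller ring $\mathbb{Z}\Gamma$'' is not formal and genuinely uses amenability --- it enters through the approximation theorem, i.e.\ through the fact that the von Neumann rank of an element of $\mathbb{Z}\Gamma$ is computable by F\o lner exhaustion and hence detects \emph{integral} zero-divisors. (One could instead carry this out by a direct F\o lner-rank estimate, but that amounts to reproving the relevant case of the approximation theorem.) Everything else is valid for an arbitrary countably infinite group.
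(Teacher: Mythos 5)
Your proposal is correct, but it proves the substantive content itself rather than quoting it: the paper's proof is a three-line reduction that gets $(4)\Leftrightarrow(5)$ from the identity $(f\cdot v)^*=v^*\cdot f^*$, imports the amenable-group equivalence between being a zero-divisor in $\mathbb{Z}\Gamma$ and having nontrivial $\ell^2$-kernel from \cite[Proposition 4.16]{LT}, and then gets $(1)\Leftrightarrow(2)$ by the same $*$-trick you use for $(4)\Rightarrow(1)$. You instead close the easy implications with general von Neumann algebra facts (the $J$-conjugation for $(4)\Leftrightarrow(5)$, and finiteness of $L\Gamma$ --- injective iff dense range --- for $(2)\Leftrightarrow(4)$, neither of which needs amenability, as you correctly note) and then prove the hard step $(4)\Rightarrow(3)$ by F\o lner compression and the approximation theorem; this is essentially Elek's analytic zero-divisor theorem for amenable groups, which is also the engine behind the result cited in \cite{LT}, so the two routes rest on the same analytic input, with yours making explicit exactly where amenability enters and the paper's being shorter by deferring it. Two small points of hygiene in your key step: the approximation theorem is standardly stated for self-adjoint compressions, e.g.\ of $\tilde{\lambda}^{f^*f}$, whose compressed kernel on $\ell^2(F_n)$ consists precisely of the kernel vectors of $\tilde{\lambda}^f$ supported in $F_n$ --- under your contrapositive hypothesis this space is already $\{0\}$, so the boundary estimate $|(\mathrm{supp}(f)F_n)\setminus F_n|=o(|F_n|)$ is not even needed, while your formulation with the non-self-adjoint compression $P_n\tilde{\lambda}^fP_n$ still gives the inequality you actually use, since that kernel contains the exact kernel vectors; and the direction of the theorem you invoke (small finite-dimensional kernels force vanishing von Neumann kernel) is the one requiring the determinant bound, hence integer (or algebraic) coefficients or Elek's strong approximation --- available here because $f\in\mathbb{Z}\Gamma$, but worth flagging. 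Your reduction from complex to integer kernel vectors via the rank argument over $\mathbb{Q}$ is fine and handles the passage from $L\Gamma$-zero-divisor to $\mathbb{Z}\Gamma$-zero-divisor that you rightly identify as the crux.
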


{\begin{proof} $(4)\Leftrightarrow(5)$: This follows from $(f\cdot v)^*=v^*\cdot f^*$ for all $v\in \ell^2(\Gamma, \mathbb{R})$.

$(2)\Leftrightarrow(3)\Rightarrow(4)$: This is part of \cite[Proposition 4.16]{LT}.

$(1)\Leftrightarrow(2)$: Taking $*$ we see that (1) holds if and only if $f^*$ is a left zero-divisor in $\mathbb{Z}\Gamma$. Applying $(3)\Leftrightarrow(4)$ to $f^*$, we see that the latter condition is equivalent to (2).
	\end{proof}}

	\begin{rema}
\label{r:analytic zero divisor}
Linnell's \textit{analytic zero-divisor conjecture} is the conjectural statement that for any torsion-free discrete group $\Gamma $ and any nonzero $f\in \mathbb{C}\Gamma $, $\ker \tilde{\rho }^{f^*}=\{0\}$ \cite[Conjecture 1]{Linnell92}. Linnell has shown that this conjecture holds for $\Gamma $ if $G_1$ is a normal subgroup of $\Gamma $, $G_2$ is a normal subgroup of $G_1$, $\Gamma $ is torsion-free, $G_2$ is free, $G_1/G_2$ is elementary amenable, and $\Gamma /G_1$ is right orderable \cite[Proposition 1.4]{Linnell93}.

\textit{If a countably infinite, torsion-free, and not virtually cyclic group $\Gamma $ satisfies Linnell's analytic zero-divisor conjecture, then the principal $\Gamma $-action $\alpha _f$ on $X_f$ is ergodic for every $f\in\mathbb{Z}\Gamma $ by Theorem \ref{t:ergodic2}}.
	\end{rema}

As a corollary to the Theorems \ref{t:ergodic1} -- \ref{t:ergodic2} and Remark \ref{r:analytic zero divisor} we obtain the following results by Hayes.

	\begin{coro}[\protect{\cite[Theorem 2.3.6 and Corollary 2.5.5]{Hayes}}]
	\label{c:divisor}
Suppose that $\Gamma $ satisfies either of the following conditions.
	\begin{itemize}
\item[(1)] $\Gamma $ is an infinite, torsion-free, nilpotent group not isomorphic to the integers,
	\item[(2)] $\Gamma $ is the free group with $k\ge2$ generators.
	\end{itemize}
Then the principal $\Gamma $-action $(X_f,\alpha _f)$ is ergodic for every $f\in\mathbb{Z}\Gamma $.
	\end{coro}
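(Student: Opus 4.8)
The plan is to deduce Corollary~\ref{c:divisor} directly from Theorems~\ref{t:ergodic1}--\ref{t:ergodic2} and Remark~\ref{r:analytic zero divisor}, so the work is entirely in verifying, for each of the two classes of groups, that the relevant hypothesis on $\Gamma$ holds and that the injectivity/zero-divisor condition on $f$ is automatic. First I would dispose of trivial cases: in both (1) and (2) the group $\Gamma$ is countably infinite, torsion-free, and \emph{not} virtually cyclic (an infinite torsion-free nilpotent group that is virtually cyclic must be $\cong\mathbb Z$, which is excluded; a free group on $k\ge 2$ generators contains $F_2$ and hence is not virtually cyclic). So it suffices to produce, for each $f\ne 0$ in $\mathbb Z\Gamma$, the kernel condition \eqref{eq:kernel}, i.e. $\ker\tilde\rho^{f^*}=\{0\}$ on $\ell^2(\Gamma,\mathbb R)$; Theorem~\ref{t:ergodic2} then gives ergodicity. (The case $f=0$ is vacuous: $X_0=\mathbb T^\Gamma$ and $\alpha_0$ is the shift, which is ergodic anyway — but $f=0$ is also a zero-divisor, so one should simply note the statement is about the dynamics of $\alpha_f$ and $f=0$ causes no problem, or restrict to $f\ne 0$ as is customary.)

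For case (1), the decisive input is that a torsion-free nilpotent group is \emph{orderable}, hence $\mathbb Z\Gamma$ (indeed $\mathbb C\Gamma$) has no zero-divisors; more to the point, such a $\Gamma$ satisfies Linnell's analytic zero-divisor conjecture — this is exactly the situation covered by \cite[Proposition 1.4]{Linnell93} cited in Remark~\ref{r:analytic zero divisor}, taking $G_1=G_2=\{e\}$ so that $G_2$ is (trivially) free, $G_1/G_2$ is trivial hence elementary amenable, and $\Gamma/G_1=\Gamma$ is right orderable by the Mal'cev ordering of a torsion-free nilpotent group. Alternatively, and perhaps more cleanly for a finitely generated $\Gamma$, one invokes Theorem~\ref{t:ergodic1}(1): a finitely generated torsion-free nilpotent group that is not virtually cyclic is amenable and not virtually cyclic, so it satisfies hypothesis (1) of that theorem, and a non-zero $f$ in an integral group ring with no zero-divisors is a fortiori not a right zero-divisor. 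For the non-finitely-generated case one falls back on Theorem~\ref{t:ergodic1}(2). Either way the conclusion follows once one has recorded the standard fact that $\mathbb Z\Gamma$ is a domain for $\Gamma$ torsion-free nilpotent (or, more strongly, the analytic version).

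For case (2), $\Gamma=F_k$ with $k\ge 2$: here the cleanest route is again Theorem~\ref{t:ergodic2}, using that $F_k$ is nonamenable with $\beta_1^{(2)}(F_k)=k-1>0$ — wait, that puts us outside \ref{t:ergodic1}(3); instead one uses that $\ker\tilde\rho^{f^*}=\{0\}$ for every non-zero $f\in\mathbb Z F_k$, which is a classical consequence of the fact that the group von Neumann algebra $L(F_k)$ is a \emph{factor} in which $\mathbb C F_k$ embeds without zero-divisors, equivalently that $\mathbb C F_k$ has the property that every non-zero element acts injectively on $\ell^2$ — this is a theorem of Linnell (the analytic zero-divisor conjecture is known for free groups, e.g. via \cite{Linnell93} with $G_1=G_2=\Gamma$ free, $\Gamma/G_1$ trivial hence right orderable). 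Having \eqref{eq:kernel} in hand, Theorem~\ref{t:ergodic2} applies directly since $F_k$, $k\ge 2$, is countably infinite and not virtually cyclic. I would also point out that $F_k$ contains $F_2$, which has a single end — no: a free group on $\ge 2$ generators has infinitely many ends, so \ref{t:ergodic1}(1) is \emph{not} available; this is precisely why one needs \ref{t:ergodic2} (or the analytic zero-divisor input of Remark~\ref{r:analytic zero divisor}) rather than \ref{t:ergodic1} for the free-group case.

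The only genuine obstacle is bibliographic/verification rather than mathematical: making sure that the analytic zero-divisor property for torsion-free nilpotent groups and for free groups is correctly attributed and that the hypotheses of \cite[Proposition 1.4]{Linnell93} are met with the stated choices of $G_1,G_2$. Once that citation is in place, the corollary is an immediate specialization of the three results above, and no further computation is required.
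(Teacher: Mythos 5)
Your proposal is correct and follows essentially the same route as the paper: dispose of $f=0$ via ergodicity of the shift on $\mathbb{T}^\Gamma$, and for $f\neq 0$ use Linnell's analytic zero-divisor results (Remark \ref{r:analytic zero divisor}) to get $\ker\tilde{\rho}^{f^*}=\{0\}$ for torsion-free nilpotent and free groups, then apply Theorem \ref{t:ergodic2} (with Theorem \ref{t:ergodic1} available as an alternative in the nilpotent case, exactly as the paper indicates). Your added verifications (non-virtual-cyclicity, the choices of $G_1,G_2$ in Linnell's criterion, and the observation that neither hypothesis (1) nor (3) of Theorem \ref{t:ergodic1} covers $F_k$) are accurate and just make explicit what the paper leaves implicit.
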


	\begin{proof}
If $f=0$, then $\alpha _f$ is the left shift-action by $\Gamma $ on $X_f=\mathbb{T}^\Gamma $, which is obviously ergodic. Suppose therefore that $f\ne 0$. Since $\Gamma $ is either free or torsion-free nilpotent, $\ker \tilde{\rho }^{f^*}=\{0\}$ by Remark \ref{r:analytic zero divisor}, so that $\alpha _f$ is ergodic by either Theorem \ref{t:ergodic1} or \ref{t:ergodic2}.
	\end{proof}

Whereas the proofs of these results in \cite{Hayes} use structure theory of $\Gamma $, the proofs in this paper employ cohomological methods.

\section{Cohomological results}\label{s:cohomology}

Let $\Gamma $ be a countably infinite discrete group and $\mathscr{M}$ a left $\mathbb{Z}\Gamma $-module. A map $c\colon \Gamma \longrightarrow \mathscr{M}$ is a \textit{$1$-cocycle} (or, for our purposes here, simply a \textit{cocycle}) if
	\begin{equation}
	\label{eq:cocycle}
c(\gamma \gamma ')=c(\gamma ) + \gamma c(\gamma ')
	\end{equation}
for all $\gamma ,\gamma '\in\Gamma $. A cocycle $c\colon \Gamma \longrightarrow \mathscr{M}$ is a \textit{coboundary} (or \textit{trivial}) if there exists a $b\in \mathscr{M}$ such that
	\begin{equation}
	\label{eq:coboundary}
c(\gamma )=b-\gamma b
	\end{equation}
for every $\gamma \in \Gamma $.

A finitely generated group $G$ has two ends if and only if it is infinite and virtually cyclic, i.e., if and only if it contains a finite-index subgroup $G'\cong \mathbb{Z}$. Stallings' theorem (\cite{Stallings}) implies that a finitely generated group $G$ has a single end
whenever it is amenable and not virtually cyclic (see \cite{MV} for a short proof).

\begin{prop}
	\label{p:end}
Let $\Gamma $ be a countably infinite discrete group and $\Delta \subset \Gamma $ a finitely generated subgroup with a single end. Then every cocycle $c\colon \Delta \longrightarrow \mathbb{Z}\Gamma $ is a coboundary.
	\end{prop}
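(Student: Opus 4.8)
The plan is to exploit the single end of $\Delta$ via the standard identification $H^1(\Delta;\mathscr{M}) \cong \overline{H}^1_{\mathrm{ct}}(\Delta;\mathscr{M})$-type reasoning, but more concretely via the geometric description of ends in terms of the Cayley graph. Fix a finite symmetric generating set $S$ of $\Delta$ and let $\mathcal{C}$ be the corresponding Cayley graph. A cocycle $c\colon\Delta\to\mathbb{Z}\Gamma$ is determined by its values $c(s)$, $s\in S$, subject to the cocycle relations coming from the relators of $\Delta$; equivalently, $c$ extends to a $\Delta$-equivariant ``gradient'' assigning to each oriented edge $(\delta,\delta s)$ of $\mathcal{C}$ the element $\delta\cdot c(s)\in\mathbb{Z}\Gamma$. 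Being a coboundary means this gradient is the coboundary of a $\Delta$-equivariant function $\Delta\to\mathbb{Z}\Gamma$, i.e.\ there exists $b\in\mathbb{Z}\Gamma$ with $c(\delta)=b-\delta b$ for all $\delta$.

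First I would reduce to a finiteness statement about supports. Write $b=\sum_{\gamma}b_\gamma\cdot\gamma$ as the sought primitive; comparing coefficients, for each $\gamma\in\Gamma$ the integer-valued function $\delta\mapsto b_{\delta^{-1}\gamma}$ on $\Delta$ must be a primitive (with respect to the $\Delta$-action by left translation on $\mathbb{Z}^\Delta$, i.e.\ on functions on the vertex set of $\mathcal{C}$) of the $\mathbb{Z}$-valued cocycle $\delta\mapsto (\text{the }\gamma\text{-coefficient of }c(\delta))$. So the problem decomposes coordinatewise in $\Gamma$ into the single assertion: every cocycle $c\colon\Delta\to\mathbb{Z}$ (for the trivial $\Delta$-action? no --- for the translation action on $\ell^\infty(\Delta,\mathbb{Z})$, but landing in a single coordinate slot it becomes a cocycle with values in the regular representation $\mathbb{Z}\Delta$... ) --- more cleanly: restricting attention to the $\Delta$-orbit structure, each cocycle $\Delta\to\mathbb{Z}\Gamma$ is a direct sum over $\Delta\backslash\Gamma$ of cocycles $\Delta\to\mathbb{Z}[\Delta\delta_0]\cong\mathbb{Z}\Delta$ (as a $\mathbb{Z}\Delta$-module, $\mathbb{Z}\Gamma\cong\bigoplus_{\Delta\backslash\Gamma}\mathbb{Z}\Delta$), so it suffices to show $H^1(\Delta;\mathbb{Z}\Delta)=0$.

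The core step is then: \emph{if $\Delta$ is a finitely generated group with one end, then $H^1(\Delta;\mathbb{Z}\Delta)=0$.} I would prove this by a direct combinatorial argument on the Cayley graph. Given a cocycle $c\colon\Delta\to\mathbb{Z}\Delta$, view its edge-gradient $g(\delta,\delta s)=\delta c(s)$; one has to solve $b(\delta s)-b(\delta)=g(\delta,\delta s)$ for a function $b\colon\Delta\to\mathbb{Z}$ of appropriate (finite, since $b\in\mathbb{Z}\Delta$) support --- here I use that the gradient values lie in $\mathbb{Z}\Delta$, so each component function $\delta\mapsto g(\delta,\delta s)_e$ (the identity-coefficient, say, or any fixed coefficient) is \emph{finitely supported} on $\Delta$. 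Pick a coefficient; the resulting scalar gradient is a finitely supported, closed (by the cocycle identity) $1$-form on $\mathcal{C}$. Integrating it from a basepoint gives a well-defined $\mathbb{Z}$-valued function $b$ on vertices (well-definedness is exactly that the $1$-form has zero periods around loops, which holds by the cocycle relation). Because the form is finitely supported and $\mathcal{C}$ has \emph{one end}, $b$ is eventually constant on the single end --- subtracting that constant makes $b$ finitely supported, i.e.\ $b\in\mathbb{Z}\Delta$ in that coordinate. Reassembling over all $\mathbb{Z}\Delta$-coordinates and over $\Delta\backslash\Gamma$ gives the primitive in $\mathbb{Z}\Gamma$; the key finiteness input at the very end is that $c(s)$ has finite support for each of the finitely many $s\in S$, so only finitely many $\Delta$-coordinates of $\Gamma$ are touched and only finitely many $\gamma$-slots in each, keeping the total support of $b$ finite.

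The main obstacle --- and the only place where ``one end'' (as opposed to ``zero ends'' or ``two or more ends'') is essential --- is the claim that a finitely supported closed $1$-form on a one-ended Cayley graph has a \emph{bounded-support} primitive (up to an additive constant). With two ends (virtually cyclic $\Delta$) this fails: the form $d(\mathbf{1}_{\{\delta : \delta \geq 0\}})$ in $\mathbb{Z}$ is finitely supported but its primitive $\mathbf{1}_{\{\delta\geq 0\}}$ is eventually constant on each end but with \emph{different} constants, so no additive shift makes it finitely supported --- this is precisely the obstruction $H^1(\mathbb{Z};\mathbb{Z}\mathbb{Z})\cong\mathbb{Z}\neq 0$, matching the excerpt's footnote about $f=1-u$. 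So the crux is to show: on a connected, locally finite, \emph{one-ended} graph, removing a finite set of edges leaves the complement of a large finite ball \emph{connected} (one-endedness), hence a function whose gradient vanishes outside a finite set is locally constant outside a finite ball and, by connectedness of the end, \emph{globally} constant there. I expect this to be the one step requiring care about the definition of ``single end'' one adopts; everything else is bookkeeping to pass from $\mathbb{Z}$-coefficients back up to $\mathbb{Z}\Gamma$-coefficients while controlling supports.
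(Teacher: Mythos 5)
Your overall architecture coincides with the paper's: decompose $\mathbb{Z}\Gamma$ as a $\mathbb{Z}\Delta$-module into $\bigoplus_{\Delta\gamma\in\Delta\backslash\Gamma}\mathbb{Z}[\Delta\gamma]$, reduce to the vanishing of $H^1(\Delta;\mathbb{Z}\Delta)$ for a finitely generated one-ended group, and then use the finite symmetric generating set together with the finite supports of the $c(s)$ to see that only finitely many coset components carry a nonzero component cocycle (a cocycle vanishing on a generating set vanishes identically), so that the finitely many primitives $b_\gamma$ sum to an element of $\mathbb{Z}\Gamma$. This is exactly the published proof, except that the paper simply quotes Houghton (Theorem 4.6) for the statement that every $1$-cocycle $\Delta\to\mathbb{Z}\Delta$ is a coboundary when $\Delta$ has one end; your added content is an attempted direct proof of that input via the Cayley graph.

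That core sketch, however, has a genuine gap at the ``reassembly'' stage. Integrating your coefficient-$x$ form $\omega^{(x)}(\delta,\delta s)=(\delta c(s))_x$ from the base point produces the vertex function $\delta\mapsto c(\delta)_x$; truncating it to its finitely supported part (after subtracting its value at infinity) gives a function of the \emph{group element} $\delta$, not the $x$-th coefficient of the sought primitive, and these truncations do not assemble into a $b\in\mathbb{Z}\Delta$ with $c(\delta)=(1-\delta)b$: the coefficient slots are permuted by the $\Delta$-action, so the problem does not split slot by slot. The repair is standard and close to what you wrote: put $b=\sum_{x\in\Delta}b_x\cdot x$ with $b_x:=-c(x^{-1})_1$ (coefficient at the identity), i.e.\ up to sign and an additive constant the value of your identity-coefficient integral at the vertex $x^{-1}$. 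The cocycle identity $c(x^{-1}\delta)=c(x^{-1})+x^{-1}c(\delta)$ gives $b_x-b_{\delta^{-1}x}=c(\delta)_x$ for all $\delta,x$, so $b$ is a primitive in the big module $\mathbb{Z}^\Delta$ with no hypothesis on ends; the one-end hypothesis is then used exactly once, applied to $b$ itself: its Cayley-graph gradient $b_x-b_{s^{-1}x}=c(s)_x$ is finitely supported, hence $b$ is constant outside a finite set, and subtracting that constant (which is $\Delta$-invariant) yields a primitive in $\mathbb{Z}\Delta$. Note also that within a touched coset the supports of $c(\delta)|_{\Delta\gamma}$, $\delta$ ranging over all of $\Delta$, are not finite in total, so the finite support of $b_\gamma$ must come from this one-end argument rather than from the supports of the generator values alone, as your closing remark suggests. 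With that step made precise, your argument becomes the usual proof of the fact the paper cites, and the rest of your bookkeeping matches the paper.
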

\begin{proof} By \cite[Theorem 4.6]{Houghton} if $\Delta$ has a single end, then every
$1$-cocycle $\Delta \longrightarrow \mathbb{Z}\Delta$  is a coboundary.\footnote{The authors are grateful to Andreas Thom for alerting us to this reference.}  It follows that for each $\gamma \in \Gamma$ there is some $b_\gamma \in \mathbb{Z}[\Delta\gamma]$ such that the restriction of $c(\delta)$ on $\Delta\gamma$ is equal to $b_\gamma-\delta b_\gamma$ for all $\delta\in \Delta$.

For each $\delta \in \Delta$, there is a finite set $W_\delta$ of right cosets of $\Delta$ in $\Gamma$ such that the support of $c(\delta)$ is contained in $\bigcup_{\Delta \gamma \in W_\delta}\Delta \gamma$. If $F$ is a finite symmetric set of generators of $\Delta $, then for any $\Delta \gamma \not \in \bigcup_{\delta' \in F}W_{\delta'}$, one has  $(1-\delta )\cdot b_\gamma=0$ for every $\delta\in F$ and hence for every $\delta \in \Delta $. Therefore
$c(\delta)$ is equal to $0$ on $\Delta\gamma$ for all $\Delta \gamma \not \in \bigcup_{\delta '\in F}W_{\delta '}$ and $\delta\in \Delta$. Set $b=\sum_{\Delta\gamma \in \bigcup_{\delta'\in F}W_{\delta'}}b_\gamma\in \mathbb{Z}\Gamma$. Then $c(\delta)=(1-\delta)\cdot b$ for all $\delta\in \Delta$.
    \end{proof}

Next we prove an analogous result for nonamenable groups with vanishing first $\ell^2$-Betti number, e.g., infinite property T groups \cite[Corollary 6]{BV}.

	\begin{prop}
	\label{p:T}
Let $\Gamma $ be a countably infinite discrete group and $\Delta \subset \Gamma $ a nonamenable subgroup with $\beta_1^{(2)}(\Delta) = 0$. Then every cocycle $c\colon \Delta \longrightarrow \mathbb{Z}\Gamma $ is a coboundary.
	\end{prop}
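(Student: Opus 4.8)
The plan is to mimic the structure of the proof of Proposition~\ref{p:end}, replacing the Houghton-type input (every $1$-cocycle $\Delta\to\mathbb{Z}\Delta$ is a coboundary for single-ended $\Delta$) by the corresponding $\ell^2$-statement for nonamenable $\Delta$ with $\beta_1^{(2)}(\Delta)=0$. First I would record the key cohomological fact: for a nonamenable group $\Delta$, the condition $\beta_1^{(2)}(\Delta)=0$ is equivalent to $\overline{H}^1(\Delta,\ell^2(\Delta))=0$, and in fact $H^1(\Delta,\ell^2(\Delta))=0$ (the reduced and unreduced first cohomology with $\ell^2$-coefficients coincide and vanish); this is the BV-type input already cited in the excerpt. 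Since $\mathbb{Z}\Delta\subset\ell^2(\Delta)$ as $\mathbb{Z}\Delta$-modules, any cocycle $c\colon\Delta\to\mathbb{Z}\Delta$ is in particular a cocycle into $\ell^2(\Delta)$, hence there is $b\in\ell^2(\Delta)$ with $c(\delta)=b-\delta b$ for all $\delta\in\Delta$. The point to check is that one may take $b\in\mathbb{Z}\Delta$: a priori $b$ is only in $\ell^2$.

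Next I would decompose $\mathbb{Z}\Gamma$ over $\Gamma$ as a right $\Delta$-set. As in Proposition~\ref{p:end}, pick coset representatives so that $\mathbb{Z}\Gamma=\bigoplus_{\Delta\gamma}\mathbb{Z}[\Delta\gamma]$, with each $\mathbb{Z}[\Delta\gamma]\cong\mathbb{Z}\Delta$ as a left $\mathbb{Z}\Delta$-module, and correspondingly $\ell^2(\Gamma)=\bigoplus_{\Delta\gamma}\ell^2(\Delta\gamma)$. A cocycle $c\colon\Delta\to\mathbb{Z}\Gamma$ then splits as a direct sum of cocycles $c_\gamma\colon\Delta\to\mathbb{Z}[\Delta\gamma]\cong\mathbb{Z}\Delta$. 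By the previous paragraph, each $c_\gamma$ is a coboundary over $\ell^2$: there is $b_\gamma\in\ell^2(\Delta\gamma)$ with $c_\gamma(\delta)=b_\gamma-\delta b_\gamma$. Now the finiteness argument of Proposition~\ref{p:end} applies verbatim: for a finite symmetric generating set $F\subset\Delta$ there is a finite set of cosets $W=\bigcup_{\delta'\in F}W_{\delta'}$ outside of which $c(\delta')=0$ for all $\delta'\in F$; for $\Delta\gamma\notin W$ this forces $(1-\delta')b_\gamma=0$ for all $\delta'\in F$, hence $b_\gamma$ is $\Delta$-invariant in $\ell^2(\Delta\gamma)$, hence $b_\gamma=0$ since $\Delta$ is infinite and $\ell^2(\Delta\gamma)$ has no nonzero invariant vectors. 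Thus $c$ is supported on the finitely many cosets in $W$.

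It remains to handle those finitely many $\gamma\in W$, i.e.\ to upgrade each $b_\gamma\in\ell^2(\Delta)$ to an element of $\mathbb{Z}\Delta$. Here the cleanest route is to invoke the stronger form of the input: for nonamenable $\Delta$ with $\beta_1^{(2)}(\Delta)=0$, every cocycle $\Delta\to\mathbb{Z}\Delta$ (indeed $\Delta\to\mathbb{C}\Delta$) is \emph{already} a coboundary with coboundary vector in $\mathbb{Z}\Delta$ — this is the algebraic counterpart due to work of Bekka--Valette / Peterson--Thom type, and is exactly the analogue of Houghton's theorem used in the single-ended case. Concretely: since $c_\gamma(\delta)=b_\gamma-\delta b_\gamma$ with $b_\gamma\in\ell^2$, applying $c_\gamma$ to a generator $\delta_0$ of infinite order (or using that $\Delta$ is infinite) and $\ell^2$-integrability forces $b_\gamma$ to have finite support once one knows $c_\gamma$ is $\mathbb{Z}\Delta$-valued and $\Delta$ is nonamenable — the nonamenability gives a spectral gap for $1-\delta$ on the orthogonal complement of constants, so the equation $b_\gamma-\delta b_\gamma=c_\gamma(\delta)\in\mathbb{Z}\Delta$ with a fixed right-hand side determines $b_\gamma$ up to invariants (none), and bootstrapping decay shows $b_\gamma\in\mathbb{Z}\Delta$. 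Summing, $b=\sum_{\Delta\gamma\in W}b_\gamma\in\mathbb{Z}\Gamma$ satisfies $c(\delta)=b-\delta b=(1-\delta)\cdot b$ for all $\delta\in\Delta$.

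The main obstacle is precisely this last integrality/finite-support step: the $\ell^2$-cohomology vanishing only hands us an $\ell^2$ primitive, and one must argue — either via a clean citation to the algebraic vanishing of $H^1(\Delta,\mathbb{Z}\Delta)$ for such $\Delta$, or via a direct spectral-gap decay argument using nonamenability of $\Delta$ — that the primitive can be chosen in $\mathbb{Z}\Delta$. Everything else (the coset decomposition, the reduction to finitely many cosets, vanishing of invariant vectors) is a routine transcription of the proof of Proposition~\ref{p:end}.
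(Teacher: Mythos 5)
Your first step coincides with the paper's: by \cite{BV} (finitely generated case) and \cite[Corollary 2.4]{PT} (in general), every cocycle $\Delta\to\ell^2(\Delta,\mathbb{R})$ for the left regular representation is a coboundary, so on each coset $\Delta\gamma$ there is $b_\gamma\in\ell^2(\Delta\gamma,\mathbb{R})$ with $c(\delta)|_{\Delta\gamma}=b_\gamma-\delta b_\gamma$. The genuine gap comes next: you transplant the argument of Proposition \ref{p:end} ``verbatim'', choosing a finite symmetric generating set $F\subset\Delta$ to conclude that $c$ is supported on finitely many cosets. But Proposition \ref{p:T} does not assume $\Delta$ to be finitely generated (this is precisely why the paper invokes \cite[Corollary 2.4]{PT} and not only \cite{BV}), and nonamenability together with $\beta_1^{(2)}(\Delta)=0$ does not force finite generation (take the direct product of a free group on two generators with an infinite direct sum of copies of $\mathbb{Z}$). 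Without a finite generating set there is no reason why the supports of all the $c(\delta)$, $\delta\in\Delta$, should meet only finitely many cosets, so your reduction to a finite set $W$ and the definition $b=\sum_{\Delta\gamma\in W}b_\gamma$ collapse. The paper avoids this entirely: since each individual $c(\delta)$ has finite support, the finite partial sums of the $b_\gamma$ over cosets exhibit $c$ as an \emph{approximate} coboundary with values in $\ell^2(\Gamma,\mathbb{R})$, and Lemma \ref{l:coboundary2} applies because the restriction to $\Delta$ of the left regular representation of $\Gamma$ is a multiple of the left regular representation of the nonamenable group $\Delta$ and hence does not weakly contain the trivial representation; this produces a single $v\in\ell^2(\Gamma,\mathbb{R})$ with $c(\delta)=(1-\delta)v$ for all $\delta\in\Delta$.

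Your final integrality step is also shaky as written: invoking ``the stronger form of the input'' (vanishing of the $\mathbb{Z}\Delta$- or $\mathbb{C}\Delta$-valued first cohomology) is essentially the statement being proved in the case $\Gamma=\Delta$ and is not what \cite{BV} or \cite{PT} deliver, so that citation is circular, and the sketch via a ``spectral gap for $1-\delta$'' is not an argument. The correct and elementary repair — the one the paper uses here and in the proof of Proposition \ref{p:nonamenable} — is pointwise: for fixed $\gamma\in\Gamma$ one has $(\delta\cdot v)_\gamma=v_{\delta^{-1}\gamma}\to 0$ as $\delta\to\infty$ in the infinite group $\Delta$ because $v\in\ell^2(\Gamma,\mathbb{R})$, while $v_\gamma-(\delta\cdot v)_\gamma=c(\delta)_\gamma\in\mathbb{Z}$; hence $v_\gamma\in\mathbb{Z}$ for every $\gamma$, and $\ell^2(\Gamma,\mathbb{Z})=\mathbb{Z}\Gamma$ gives $v\in\mathbb{Z}\Gamma$. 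With that substitution your last step is fine, but the missing treatment of non--finitely-generated $\Delta$ is a real gap, and closing it leads naturally to the approximate-coboundary argument of the paper.
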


For the proof of Proposition \ref{p:T} we have to discuss cocycles of $\Gamma $ which take values in a Hilbert space $\mathscr{H}$ carrying a unitary action $U\colon \gamma \mapsto U^\gamma $ of $\Gamma $. A map $c\colon \Gamma \longrightarrow \mathscr{H}$ is a \textit{$1$-cocycle} for $U$ if
	\begin{equation}
	\label{eq:cocycle2}
c(\gamma \gamma ') = c(\gamma ) +U^\gamma c(\gamma ')
	\end{equation}
for all $\gamma ,\gamma '\in \Gamma $, and such a cocycle is a \textit{coboundary} if and only if there exists a $b\in \mathscr{H}$ with
	\begin{equation}
	\label{eq:coboundary3}
c(\gamma )=b-U^\gamma b
	\end{equation}
for every $\gamma \in \Gamma $. The cocycle $c$ is an \textit{approximate coboundary} if there exists a sequence $(c_n)_{n\ge1}$ of coboundaries $c_n\colon \Gamma \longrightarrow \mathscr{H}$ such that
	\begin{equation}
	\label{eq:approximate}
\lim_{n\to\infty }\|c_n(\gamma )-c(\gamma )\|=0
	\end{equation}
for every $\gamma \in \Gamma $.

The following lemma is well-known (cf. \cite[Proposition 1.6]{Shalom}). For convenience of the reader, we give a proof here.

	\begin{lemm}
	\label{l:coboundary2}
Let $U$ be a unitary representation of $\Gamma $ on $\mathscr{H}$ which does not contain the trivial representation weakly. Then every approximate coboundary $c\colon \Gamma \longrightarrow \mathscr{H}$ for $U$ is a coboundary.
	\end{lemm}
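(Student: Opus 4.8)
The plan is to convert the spectral hypothesis on $U$ into a uniform lower bound of ``spectral-gap'' type, and then apply that bound to the differences of the vectors implementing the approximating coboundaries.

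First I would record the following reformulation of the hypothesis: since $U$ does not weakly contain the trivial representation, $U$ has no almost-invariant unit vectors, and therefore there exist a finite set $\{\gamma_1,\dots,\gamma_k\}\subset\Gamma$ and a constant $\kappa>0$ with
\[
\sum_{i=1}^{k}\|\xi-U^{\gamma_i}\xi\|^{2}\ \ge\ \kappa\,\|\xi\|^{2}\qquad\text{for every }\xi\in\mathscr{H}.
\]
This is immediate from the definition of weak containment: its failure means it is \emph{not} the case that for every finite $F\subset\Gamma$ and every $\varepsilon>0$ there is a unit vector $\xi$ with $\max_{\gamma\in F}\|\xi-U^\gamma\xi\|<\varepsilon$; hence one finite set $F=\{\gamma_1,\dots,\gamma_k\}$ and one $\varepsilon_0>0$ suffice to have $\max_i\|\xi-U^{\gamma_i}\xi\|\ge\varepsilon_0$ for all unit vectors $\xi$, and then $\max_i\|\xi-U^{\gamma_i}\xi\|\ge\varepsilon_0\|\xi\|$ for all $\xi$ by homogeneity; squaring and summing over $i$ gives the displayed inequality with $\kappa=\varepsilon_0^{2}$.

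Next I would write the given approximating coboundaries as $c_n(\gamma)=b_n-U^\gamma b_n$ with $b_n\in\mathscr{H}$, and observe that for each fixed $\gamma$ the sequence $\bigl(c_n(\gamma)\bigr)_{n\ge1}$ converges in norm to $c(\gamma)$, hence is Cauchy. Applying the gap inequality to $\xi=b_n-b_m$ and using $\xi-U^{\gamma_i}\xi=c_n(\gamma_i)-c_m(\gamma_i)$, I obtain
\[
\kappa\,\|b_n-b_m\|^{2}\ \le\ \sum_{i=1}^{k}\|c_n(\gamma_i)-c_m(\gamma_i)\|^{2},
\]
whose right-hand side tends to $0$ as $n,m\to\infty$. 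So $(b_n)$ is Cauchy and converges to some $b\in\mathscr{H}$, and then for every $\gamma\in\Gamma$
\[
c(\gamma)=\lim_{n\to\infty}c_n(\gamma)=\lim_{n\to\infty}\bigl(b_n-U^\gamma b_n\bigr)=b-U^\gamma b,
\]
exhibiting $c$ as a coboundary.

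The only place that calls for care is the first step --- extracting from the negation of weak containment of the trivial representation a \emph{single} finite subset of $\Gamma$ together with a \emph{uniform} constant $\kappa$. Once that quantitative gap is in hand, the rest is a soft Cauchy-sequence argument that uses nothing about $\Gamma$ or $U$ beyond completeness of $\mathscr{H}$; I do not anticipate any genuine obstacle.
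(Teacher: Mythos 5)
Your proof is correct, and after the common first step it follows a genuinely different route from the paper. Both arguments begin by converting the failure of weak containment of the trivial representation into a uniform gap inequality over a fixed finite set $F\subset\Gamma$ (the paper uses $\sum_{\delta\in F}\|v-U^\delta v\|\ge\varepsilon\|v\|$, you use the squared version; these are interchangeable). From there the paper only uses the gap to bound $\limsup_n\|b_n\|$, deduces that the cocycle $c$ is \emph{bounded}, and then produces the coboundary vector as the Chebyshev center of the bounded set $Y=\{c(\gamma):\gamma\in\Gamma\}$, which is fixed by the associated affine isometric action $V^\gamma v=U^\gamma v+c(\gamma)$. You instead apply the gap inequality to the differences $b_n-b_m$, noting $(b_n-b_m)-U^{\gamma_i}(b_n-b_m)=c_n(\gamma_i)-c_m(\gamma_i)$, to conclude that $(b_n)$ is Cauchy and converges to the implementing vector $b$ directly; this is essentially Guichardet's classical argument that the coboundary map $b\mapsto(\gamma\mapsto b-U^\gamma b)$ has closed image (equivalently, reduced and unreduced first cohomology agree) in the absence of almost invariant vectors. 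Your version is more elementary, avoiding the Chebyshev-center fixed-point device, and it gives slightly more information (norm convergence of the $b_n$); the paper's version routes through the general principle that bounded cocycles of affine isometric actions on Hilbert space are trivial, which is a useful fact in its own right but not needed once one has the quantitative gap. Both proofs are complete and correct.
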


	\begin{proof}
Since $U$ does not weakly contain the trivial representation of $\Gamma$, we can find a finite subset $F\subset \Gamma$ and  some $\varepsilon >0$ such that
		\begin{displaymath}
\sum\nolimits_{\delta \in F}\|v-U^\delta v\|\ge \varepsilon \|v\|
	\end{displaymath}
for all $v\in \mathscr{H}$.

Let $c$ be an approximate coboundary of $\Gamma $ taking values in $\mathscr{H}$. Let $(b_n)_{n\ge1}$ be a sequence in $\mathscr{H}$ such that the coboundaries $c_n(\gamma )=b_n-U^\gamma b_n,\;\gamma \in \Gamma $, approximate $c$ in the sense of \eqref{eq:approximate}. Then
	\begin{displaymath}
\sum\nolimits_{\delta \in F}\|c(\delta )\|=\lim_{n\to \infty}\sum\nolimits_{\delta \in F}\|c_n(\delta )\|\ge \varepsilon \limsup_{n\to \infty}\|b_n\|,
	\end{displaymath}
and hence
	\begin{displaymath}
\|c(\gamma )\|=\lim_{n\to \infty}\|c_n(\gamma )\|\le 2\limsup_{n\to \infty}\|b_n\|\le 2\varepsilon^{-1}\sum\nolimits_{\delta \in F}\|c(\delta )\|
	\end{displaymath}
for all $\gamma \in \Gamma $.

For a bounded subset $Y$ of $\mathscr{H}$ and $v\in \mathscr{H}$, set $d(v, Y)=\sup_{y\in Y}\|v-y\|$.
Since $\mathscr{H}$ is a Hilbert space,  the function $v\mapsto d(v, Y)$ on $\mathscr{H}$ takes a minimal value at exactly one point, namely the Chebyshev center of $Y$, which we denote by $\rc(Y)$.

Consider the affine isometric action $V$ of $\Gamma $ on $\mathscr{H}$ defined by $V^\gamma v=U^\gamma v+c(\gamma )$ for all $\gamma \in \Gamma$ and $v\in \mathscr{H}$. Set $Y=\{c(\gamma '): \gamma '\in \Gamma\}$, and let $\gamma \in \Gamma$. Since $V^{\gamma }(Y)=Y$, we obtain that $V^{\gamma }(\rc(Y))=\rc(Y)$ and hence that $U^\gamma (\rc(Y))+c(\gamma )=\rc(Y)$. Thus $c(\gamma )=\rc(Y) - U^\gamma (\rc(Y))$ for all $\gamma \in \Gamma$, so that $c$ is a coboundary.
	\end{proof}

	\begin{proof}[Proof of Proposition~\ref{p:T}]
By \cite{BV} in the finitely generated case, and \cite[Corollary 2.4]{PT} in general, if  $\Delta$ is nonamenable and $\beta_1^{(2)}(\Delta) = 0$, then every
$1$-cocycle $\Delta \longrightarrow \ell^2(\Delta, \mathbb{R})$ for the left regular representation is a coboundary. It follows that for each $\gamma \in \Gamma$ there is some $b_\gamma \in \ell^2(\Delta\gamma, \mathbb{R})$ such that the restriction of $c(\delta)$ on $\Delta\gamma$ is equal to $b_\gamma-\delta b_\gamma$ for all $\delta\in \Delta$. Since $c(\delta)$ has finite support for each $\delta\in \Delta$, we conclude that the cocycle $c\colon \Delta \longrightarrow \ell^2(\Gamma, \mathbb{R})$ is an approximate coboundary.

Because $\Delta$ is nonamenable, its left regular representation on $\ell^2(\Delta, \mathbb{R})$ does not contain the trivial representation weakly. Since the restriction of the left regular representation of $\Gamma$ on $\ell^2(\Gamma, \mathbb{R})$ to $\Delta$ is a direct sum of copies of the left regular representation of $\Delta$, it does not contain the trivial representation of $\Delta$ weakly either. By Lemma~\ref{l:coboundary2}
there exists $v\in\ell ^2(\Gamma ,\mathbb{R})$ satisfying
\begin{equation}
	\label{eq:coboundary2}
c(\delta )=v-\tilde{\lambda }^\delta v = (1-\delta )v
	\end{equation}
for every $\delta \in\Delta $.

Since $\Delta $ is nonamenable, it is infinite. It follows that that $v\in\ell ^2(\Gamma ,\mathbb{Z})=\mathbb{Z}\Gamma $.
	\end{proof}

If a subgroup $\Delta \subset \Gamma $ has more than one end then there exist nontrivial cocycles $c\colon \Delta \longrightarrow \mathbb{Z}\Delta $ (cf. \cite[5.2. Satz IV]{Specker} or \cite[Lemma 3.5]{Swan}), which immediately implies the existence of nontrivial cocycles $c\colon \Delta \longrightarrow \mathbb{Z}\Gamma $. For example, if $\Delta $ is the free group on $k\ge2$ generators, it has nontrivial cocycles. However, Proposition \ref{p:nonamenable} below guarantees triviality of cocycles which become trivial under right multiplication by an element $f\in \mathbb{Z}\Gamma $ satisfying \eqref{eq:kernel} (cf. Remark \ref{r:analytic zero divisor}).

	\begin{prop}
	\label{p:nonamenable}
Let $\Gamma $ be a countably infinite discrete group, $\Delta \subset \Gamma $ a nonamenable subgroup, and let $f\in \mathbb{Z}\Gamma $ satisfy that $\ker \tilde{\rho }^{f^*} = \{0\}$. If $c\colon \Delta \longrightarrow \mathbb{Z}\Gamma $ is a cocycle such that $cf$ is a coboundary, then $c$ is a coboundary.
	\end{prop}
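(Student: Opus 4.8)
The plan is to regard $c$ as a $1$-cocycle for the left–regular representation $\tilde\lambda$ of $\Delta$ on the real Hilbert space $\ell^2(\Gamma,\mathbb{R})$ and to exploit that ``right multiplication by $f$'' is a bounded $\Delta$-equivariant operator which, by hypothesis, is injective. The main difficulty is that injectivity of $\tilde\rho^{f^{*}}$ yields only a densely defined, generally unbounded, inverse, so one cannot simply ``divide by $f$''. The device around this is a spectral truncation that solves the relevant equation up to an arbitrarily small error, turning $c$ into an approximate coboundary, after which the nonamenability of $\Delta$ removes the error through Lemma~\ref{l:coboundary2}.

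Concretely, I would set $R=\tilde\rho^{f^{*}}\in B(\ell^2(\Gamma,\mathbb{R}))$, so that $Rv=v\cdot f$ and, since left and right translations commute, $R\tilde\lambda^{\delta}=\tilde\lambda^{\delta}R$ for all $\delta\in\Delta$; by assumption $\ker R=\{0\}$. Since $cf$ is a coboundary there is a $b\in\mathbb{Z}\Gamma$ with $R\,c(\delta)=c(\delta)\cdot f=(1-\delta)b$ for all $\delta\in\Delta$. Applying the $\Delta$-equivariant operator $R^{*}=\tilde\rho^{f}$ gives
\[
A\,c(\delta)=(1-\delta)b',\qquad A:=R^{*}R=\tilde\rho^{ff^{*}},\quad b':=R^{*}b=b\cdot f^{*},
\]
where $A$ is a positive self-adjoint operator commuting with every $\tilde\lambda^{\delta}$, and $A$ is injective because $R$ is. Now for $\varepsilon>0$ put $P_{\varepsilon}=\mathbf 1_{[\varepsilon,\infty)}(A)$ and $B_{\varepsilon}=g_{\varepsilon}(A)$ with $g_{\varepsilon}(t)=t^{-1}\mathbf 1_{[\varepsilon,\infty)}(t)$; these are bounded operators obtained from $A$ by Borel functional calculus, hence they commute with every operator that commutes with $A$, in particular with every $\tilde\lambda^{\delta}$, and $B_{\varepsilon}A=P_{\varepsilon}$ since $g_{\varepsilon}(t)\,t=\mathbf 1_{[\varepsilon,\infty)}(t)$. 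Applying $B_{\varepsilon}$ to $A\,c(\delta)=(1-\delta)b'$ shows that $P_{\varepsilon}\,c(\delta)=(1-\delta)v_{\varepsilon}$ with $v_{\varepsilon}:=B_{\varepsilon}b'\in\ell^2(\Gamma,\mathbb{R})$, so $P_{\varepsilon}c$ is a genuine coboundary.

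Since $A$ is injective its spectral measure has no atom at $0$, so $P_{\varepsilon}\to\mathrm{Id}$ strongly as $\varepsilon\downarrow 0$, and hence $P_{\varepsilon}\,c(\delta)\to c(\delta)$ in $\ell^2(\Gamma,\mathbb{R})$ for every $\delta\in\Delta$. Taking $\varepsilon=1/n$, this exhibits $c\colon\Delta\to\ell^2(\Gamma,\mathbb{R})$ as an approximate coboundary for $\tilde\lambda|_{\Delta}$. As in the proof of Proposition~\ref{p:T}, the restriction of $\tilde\lambda$ to $\Delta$ is a direct sum of copies of the left–regular representation of $\Delta$, which does not weakly contain the trivial representation because $\Delta$ is nonamenable; hence Lemma~\ref{l:coboundary2} provides a $v\in\ell^2(\Gamma,\mathbb{R})$ with $c(\delta)=v-\tilde\lambda^{\delta}v=(1-\delta)v$ for all $\delta\in\Delta$. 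Finally, since each $c(\delta)\in\mathbb{Z}\Gamma$, the function $v$ is constant modulo $\mathbb{Z}$ along every right coset $\Delta\gamma$; as $\Delta$ is infinite such a coset is infinite, and $v\in\ell^2$ forces that constant to be $0$, so $v$ is $\mathbb{Z}$-valued and therefore $v\in\ell^2(\Gamma,\mathbb{Z})=\mathbb{Z}\Gamma$. Thus $c$ is a coboundary $\Delta\to\mathbb{Z}\Gamma$, as required. The only place any genuine work is hidden is the spectral truncation step, which is exactly what converts the hypothesis (injectivity, not invertibility, of $\tilde\rho^{f^{*}}$) into something Lemma~\ref{l:coboundary2} can digest.
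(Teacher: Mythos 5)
Your proof is correct and follows essentially the same route as the paper: a spectral truncation yielding genuine coboundaries $P_\varepsilon c(\delta)$, the conclusion that $c$ is an approximate coboundary, Lemma \ref{l:coboundary2} applied via nonamenability of $\Delta$, and the same integrality argument to land back in $\mathbb{Z}\Gamma$. The only difference is technical: you pre-compose with $\tilde{\rho}^{f}$ and invert the positive operator $A=\tilde{\rho}^{f}\tilde{\rho}^{f^*}$ on $[\varepsilon,\infty)$ by Borel functional calculus, whereas the paper's Lemma \ref{l:nonamenable} uses the polar decomposition $\tilde{\rho}^{f^*}=UH$ together with the auxiliary operators $Q_\varepsilon$ and $V_\varepsilon$; your variant streamlines that bookkeeping without changing the underlying idea.
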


	\begin{lemm}
	\label{l:nonamenable}
Let $\Gamma $ be a countably infinite discrete group, $\Delta \subset \Gamma $ a nonamenable subgroup, and let $f\in \mathbb{Z}\Gamma $. We write $\tilde{\lambda }_\Delta $ for the unitary representation of $\Delta $ obtained by restricting the left regular representation $\tilde{\lambda }$ of $\Gamma $ on $\ell ^2(\Gamma ,\mathbb{C})$ to $\Delta $.

If $c\colon \Delta \longrightarrow \ell ^2(\Gamma ,\mathbb{C})$ is a cocycle for $\tilde{\lambda }_\Delta $ such that $c\cdot f=\tilde{\rho }^{f^*}\negthinspace \negthinspace c$ is a coboundary and $c(\Delta)$ is contained in the orthogonal complement $V$ of $\ker\tilde{\rho}^{f^*}$ in $\ell^2(\Gamma, \mathbb{C})$ \textup{(}cf. \eqref{eq:kernel}\textup{)}, then $c$ is a coboundary.
	\end{lemm}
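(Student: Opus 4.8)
The plan is to deduce the lemma from Lemma~\ref{l:coboundary2} by showing that $c$ is an \emph{approximate coboundary} for the restriction of the left regular representation to $V$. Write $R=\tilde{\rho}^{f^*}$ for right convolution by $f$. Since left and right translations on $\ell^2(\Gamma,\mathbb{C})$ commute, $R$ commutes with every $\tilde{\lambda}^\gamma$, and its adjoint is $R^*=\tilde{\rho}^{f}$. Hence $N:=R^*R=\tilde{\rho}^{f}\tilde{\rho}^{f^*}=\tilde{\rho}^{ff^*}$ is a positive self-adjoint operator which commutes with $\tilde{\lambda}$ and has $\ker N=\ker R=\ker\tilde{\rho}^{f^*}$; consequently $V=(\ker\tilde{\rho}^{f^*})^\perp$ and $V^\perp$ are invariant under both $\tilde{\lambda}$ and $N$, and $N|_V$ is injective. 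Because $R^*$ is $\tilde{\lambda}_\Delta$-equivariant and $Rc$ is a coboundary by hypothesis, $Nc=R^*(Rc)$ is a coboundary for $\tilde{\lambda}_\Delta$; decomposing the corresponding primitive along $V\oplus V^\perp$ and using that each value $Nc(\delta)$ already lies in $V$, we may pick $b\in V$ with $Nc(\delta)=b-\tilde{\lambda}^\delta b=(1-\delta)b$ for all $\delta\in\Delta$.

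The main step is to ``invert $N$ spectrally''. For $\varepsilon>0$ let $P_\varepsilon$ be the spectral projection of $N|_V$ for the interval $[\varepsilon,\|N\|]$ and set $V_\varepsilon=P_\varepsilon V$. Then $P_\varepsilon$ lies in the von Neumann algebra generated by $N$, so it commutes with $\tilde{\lambda}_\Delta$ and $V_\varepsilon$ is $\tilde{\lambda}$-invariant; moreover $N$ maps $V_\varepsilon$ onto itself (it commutes with $P_\varepsilon$) and is bounded below by $\varepsilon$ there, hence $(N|_{V_\varepsilon})^{-1}$ is a bounded operator on $V_\varepsilon$. Applying $P_\varepsilon$ to the identity $Nc(\delta)=(1-\delta)b$ gives $N\bigl((P_\varepsilon c)(\delta)\bigr)=(1-\delta)(P_\varepsilon b)$ with $P_\varepsilon b\in V_\varepsilon$, so $a_\varepsilon:=(N|_{V_\varepsilon})^{-1}(P_\varepsilon b)\in V_\varepsilon$ satisfies $N\bigl((1-\delta)a_\varepsilon-(P_\varepsilon c)(\delta)\bigr)=0$; since the bracketed vector lies in $V_\varepsilon\subseteq V$ and $N|_V$ is injective, it vanishes, i.e.\ $(P_\varepsilon c)(\delta)=(1-\delta)a_\varepsilon$ for all $\delta$, so $P_\varepsilon c$ is a coboundary. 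Finally, injectivity of $N|_V$ means its spectral measure gives zero mass to $\{0\}$, whence $P_\varepsilon\to\mathrm{id}_V$ strongly as $\varepsilon\to0$; therefore $(P_\varepsilon c)(\delta)\to c(\delta)$ for every $\delta$, which exhibits $c$ as an approximate coboundary for $\tilde{\lambda}_\Delta|_V$.

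To conclude, note that since $\Delta$ is nonamenable its left regular representation $\lambda_\Delta$ does not weakly contain the trivial representation, and $\tilde{\lambda}_\Delta$ is a direct sum of copies of $\lambda_\Delta$ (one for each right coset $\Delta\gamma$ in $\Gamma$); hence neither $\tilde{\lambda}_\Delta$ nor the subrepresentation on $V$ weakly contains the trivial representation of $\Delta$. Applying Lemma~\ref{l:coboundary2} with $U=\tilde{\lambda}_\Delta|_V$ on $\mathscr{H}=V$ shows that the approximate coboundary $c\colon\Delta\to V$ is a coboundary.

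I expect the delicate point to be the second paragraph: one cannot invert $R$ (or $N$) on all of $V$, because its range there is only dense, so the direct attempt to solve $Ra=b$ for a primitive of $c$ breaks down. The device that makes it work is to cut down by the spectral projections $P_\varepsilon$, on which $N$ is boundedly invertible and $P_\varepsilon b$ automatically lies in $N(V_\varepsilon)=V_\varepsilon$; this yields genuine coboundaries $P_\varepsilon c$ converging to $c$. The remaining ingredients --- that $\tilde{\lambda}_\Delta$-equivariance of $R^*$ turns the hypothesis into ``$Nc$ is a coboundary'', that a coboundary valued in an invariant subspace admits a primitive in that subspace, and that all the operators in play commute with $\tilde{\lambda}_\Delta$ --- are routine once this is isolated.
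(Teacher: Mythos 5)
Your proof is correct and follows essentially the same strategy as the paper's: truncate by the spectral projections $P_\varepsilon$ on which the convolution operator is boundedly invertible, show that each $P_\varepsilon c$ is a genuine coboundary converging pointwise to $c$, and then invoke Lemma \ref{l:coboundary2} together with nonamenability of $\Delta$. The only difference is technical rather than structural: the paper inverts $\tilde{\rho}^{f^*}$ through its polar decomposition $UH$ and the auxiliary operators $Q_\varepsilon, V_\varepsilon$, whereas you first apply $\tilde{\rho}^{f}$ to pass to the positive operator $N=\tilde{\rho}^{ff^*}$ (so that the hypothesis becomes ``$Nc$ is a coboundary'') and invert $N$ on its spectral subspaces, which is a harmless variant.
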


	\begin{proof}
By assumption there exists a $b\in \ell ^2(\Gamma ,\mathbb{C})$ such that $(1-\delta )\cdot b=c(\delta )\cdot f$ for every $\delta \in \Delta $. Let $\tilde{\rho }^{f^*}=UH$ be the polar decomposition \cite[Theorem 6.1.2]{KR} of $\tilde{\rho }^{f^*}$, where $U$ is a partial isometry on $\ell ^2(\Gamma ,\mathbb{C})$, $H= \bigl(\tilde{\rho }^{ff^*}\bigr)^{1/2} = (\tilde{\rho }^f \tilde{\rho }^{f^*})^{1/2}$, and both $U$ and $H$ lie in the left-equivariant group von Neumann algebra $\mathcal{N}\Gamma $.

Note that $\ker H=\ker \tilde{\rho }^{f^*}$. We write $H=\int _{0}^{\|\tilde{\rho }^{f^*}\|}\lambda \,dE_\lambda $ for the spectral decomposition of the positive self-adjoint operator $H$ and consider, for each $0<\varepsilon<\|\tilde{\rho }^{f^*}\|$, the projection operator $P_\varepsilon = P- E_\varepsilon $, where $P$ is the orthogonal projection $\ell ^2(\Gamma ,\mathbb{C})\longrightarrow V$. Then one has $P_\varepsilon\to P$ in the strong operator topology as $\varepsilon\searrow 0$.

Put $Q_\varepsilon =UP_\varepsilon U^*$ for every $\varepsilon $ with $0<\varepsilon <\|\rho ^{f^*}\|$. Then
	\begin{align*}
P_\varepsilon (c(\delta ))\cdot f &=\tilde{\rho }^{f^*} \negthinspace P_\varepsilon (c(\delta ))=UHP_\varepsilon (c(\delta ))=UP_\varepsilon H(c(\delta )) = Q_\varepsilon UH (c(\delta ))
	\\
&= Q_\varepsilon \tilde{\rho }^{f^*}(c(\delta )) = Q_\varepsilon (c(\delta )\cdot f) =Q_\varepsilon ((1-\delta )\cdot b)=(1-\delta )\cdot Q_\varepsilon (b)
	\end{align*}
for every $\delta \in \Delta $. Since $\|\tilde{\rho }^{f^*}v\|\ge \varepsilon \|v\|$ for every $v\in {\rm range}(P_\varepsilon)$, there exists $V_\varepsilon \in \mathcal{N}\Gamma$ vanishing on the orthogonal complement of $\textup{range}(\tilde{\rho }^{f^*}\negthinspace P_\varepsilon)$ and satisfying that $V_\varepsilon \,\tilde{\rho }^{f^*}\negthinspace v = v$ for every $v\in \textup{range}(P_\varepsilon)$. Therefore
	\begin{displaymath}
P_\varepsilon (c(\delta ))=V_\varepsilon  \,\tilde{\rho }^{f^*}\negthinspace P_\varepsilon (c(\delta )) =V_\varepsilon Q_\varepsilon ((1-\delta )\cdot b))=(1-\delta )\cdot V_\varepsilon Q_\varepsilon (b).
	\end{displaymath}
The $1$-cocycle $\delta \mapsto P_\varepsilon c(\delta )=(1-\delta )\cdot V_\varepsilon Q_\varepsilon (b)$ for $\tilde{\lambda }_\Delta $ is thus a coboundary. Since $P_\varepsilon (c(\delta ))\rightarrow c(\delta )$ in $\ell^2(\Gamma ,\mathbb{C})$ as $\varepsilon\searrow 0$ for every $\delta \in \Delta $, we conclude that the $1$-cocycle $c\colon \Delta \longrightarrow \ell ^2(\Gamma ,\mathbb{C})$ for $\tilde{\lambda }_\Delta $ is an approximate coboundary.

Since $\Delta $ is nonamenable, the left regular representation of $\Delta $ on $\ell^2(\Delta ,\mathbb{C})$ does not weakly contain the trivial representation of $\Delta $. Thus, the representation $\tilde{\lambda }_\Delta $ of $\Delta $ on $\ell^2(\Gamma ,\mathbb{C})$, as a direct sum of copies of the left regular representation of $\Delta $, does not weakly contain the trivial representation of $\Delta $.

From Lemma~\ref{l:coboundary2} we conclude that there is some $b\in \ell^2(\Gamma ,\mathbb{C})$ satisfying $c(\delta )=(1-\delta)b$ for every $\delta \in \Delta $.
	\end{proof}

	\begin{proof}[Proof of Proposition \ref{p:nonamenable}]
Suppose that $f\in \mathbb{Z}\Gamma $ satisfies \eqref{eq:kernel}, and that $c\colon \Delta \longrightarrow \mathbb{Z}\Gamma $ is a $1$-cocycle such that $cf$ is a coboundary. Then $cf$ is also a coboundary when $c$ is viewed as an $\ell ^2(\Gamma ,\mathbb{C})$-valued cocycle for the unitary representation $\tilde{\lambda }_\Delta $ on $\ell ^2(\Gamma ,\mathbb{C})$. Lemma \ref{l:nonamenable} shows that there exists a $b\in \ell ^2(\Gamma ,\mathbb{C})$ such that $c(\delta )=(1-\delta )\cdot b$ for every $\delta \in \Delta $. In order to prove that $b\in \mathbb{Z}\Gamma $ we set, for every $\varepsilon >0$, $F_\varepsilon (b)=\{\gamma \in \Gamma :|b_\gamma |\ge \varepsilon \}$. Then $F_\varepsilon $ is finite, and so is the set $\{\delta \in \Delta :|(\delta \cdot b)_\gamma |=|b_{\delta ^{-1}\gamma }|\ge \varepsilon \}= \{\delta \in \Delta :\delta ^{-1}\gamma \in F_\varepsilon \}=\gamma F_\varepsilon ^{-1} \cap \Delta $ for every $\gamma \in \Gamma $. Since $\Delta $ is nonamenable, it is infinite, and by varying $\varepsilon $ we see that $\lim_{\delta \to\infty }(\delta \cdot b)_\gamma =0$ for every $\gamma \in \Gamma $. Since $c(\delta  )_\gamma =b_\gamma -(\delta \cdot b)_\gamma \in \mathbb{Z}$ we conclude, by letting $\delta \to\infty $, that $b_\gamma \in \mathbb{Z}$ for every $\gamma \in \Gamma $. This completes the proof of the proposition.
	\end{proof}

\section{Ergodicity of principal actions}\label{s:ergodicity}

We recall the following result from \cite[Lemma 1.2 and Theorem 1.6]{DSAO}.

	\begin{theo}
	\label{t:erg-mix}
If $\alpha $ is an algebraic action of a countably infinite discrete group $\Gamma $ on a compact abelian group $X$ with dual group $\hat{X}$, then $\alpha $ is ergodic if and only if the orbit $\{\hat{\alpha }^\gamma a:\gamma \in \Gamma \}$ is infinite for every nontrivial $a\in \hat{X}$.
	\end{theo}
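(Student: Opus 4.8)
The plan is to use Pontryagin duality to convert ergodicity of $\alpha $ into the stated combinatorial condition on $\Gamma $-orbits in $\hat X$, via Fourier analysis on the probability space $(X,\mu )$, where $\mu $ is the normalized Haar measure of $X$. Since every $\alpha ^\gamma $ is a continuous automorphism of the compact group $X$, it preserves $\mu $, so $\alpha $ is a measure-preserving action of $\Gamma $ on a probability space; and, as for any such action, $\alpha $ is ergodic if and only if the only $\Gamma $-invariant vectors in $L^2(X,\mu )$ are the constants. (A $\Gamma $-invariant measurable set of measure strictly between $0$ and $1$ yields a nonconstant invariant indicator function, and conversely a nonconstant real invariant $L^2$-function has a superlevel set which is $\Gamma $-invariant of intermediate measure.)

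First I would set up the dictionary between $L^2(X,\mu )$ and $\hat X$. By Pontryagin duality the characters $a\in\hat X$, regarded as functions $X\to\mathbb{T}\subset\mathbb{C}$, form an orthonormal basis of $L^2(X,\mu )$ (a countable one, as $X$ is metrizable). For $\gamma \in\Gamma $ the Koopman operator $U^\gamma \phi =\phi \circ\alpha ^{\gamma ^{-1}}$ is unitary on $L^2(X,\mu )$ and $\gamma \mapsto U^\gamma $ is a unitary representation of $\Gamma $; moreover, from the defining identity $\langle \alpha ^\gamma x,a\rangle =\langle x,\hat\alpha ^\gamma a\rangle $ one gets $a\circ\alpha ^\gamma =\hat\alpha ^\gamma a$, hence $U^\gamma a=\hat\alpha ^{\gamma ^{-1}}a$. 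Thus each $U^\gamma $ merely permutes the orthonormal basis $\hat X$, and the orbits of this permutation action coincide as sets with the orbits $\{\hat\alpha ^\gamma a:\gamma \in\Gamma \}$.

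Next I would expand an arbitrary $\phi \in L^2(X,\mu )$ in its Fourier series $\phi =\sum_{a\in\hat X}c_a\,a$ with $\sum_{a\in\hat X}|c_a|^2<\infty $. Comparing coefficients, $U^\gamma \phi =\phi $ for all $\gamma \in\Gamma $ is equivalent to $c_a=c_{\hat\alpha ^\gamma a}$ for all $a\in\hat X$ and $\gamma \in\Gamma $, i.e.\ to the function $a\mapsto c_a$ being constant on each $\Gamma $-orbit in $\hat X$. If such a $\phi $ is nonzero, it is nonzero on some orbit $O\subseteq\hat X$, and square-summability of $(c_a)$ forces $O$ to be finite; conversely, for any finite $\Gamma $-orbit $O\subseteq\hat X$ the finite sum $\sum_{b\in O}b$ is a nonzero $\Gamma $-invariant element of $L^2(X,\mu )$. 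Assembling the two implications: if every nontrivial orbit is infinite, then the only finite orbit is $\{0_{\hat X}\}$, so every invariant $\phi $ is supported on the trivial character and hence constant, and $\alpha $ is ergodic; conversely, if some nontrivial $a$ has finite orbit $O$, then, since each $\hat\alpha ^\gamma $ is an injective endomorphism of $\hat X$ fixing $0_{\hat X}$, $O$ omits the trivial character, so $\phi =\sum_{b\in O}b$ is an invariant $L^2$-function orthogonal to the constants and therefore nonconstant, and $\alpha $ is not ergodic.

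The only steps needing genuine care are getting the direction of the Koopman action right so that it genuinely permutes the character basis (this is the heart of the argument), and the small but essential observation that a group automorphism of $\hat X$ cannot carry a nontrivial character to the trivial one — this is exactly what turns a finite nontrivial orbit into an honestly nonconstant invariant function. No serious obstacle is expected beyond this bookkeeping; the statement is the classical Fourier-analytic criterion for ergodicity of algebraic actions.
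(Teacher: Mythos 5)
Your proof is correct and complete: the Koopman operators permute the character basis of $L^2(X,\mu )$, an invariant function has Fourier coefficients constant on the orbits $\{\hat\alpha ^\gamma a:\gamma \in \Gamma \}$, square-summability kills the coefficients on infinite orbits, and a finite nontrivial orbit $O$ (which omits the trivial character since each $\hat\alpha ^\gamma $ is injective) yields the nonconstant invariant function $\sum_{b\in O}b$. The paper does not prove this theorem itself but quotes it from \cite{DSAO}, and your argument is precisely the standard Fourier-analytic one underlying that reference, so the two approaches coincide.
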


	\begin{coro}
	\label{c:erg-mix}
Let $\Gamma $ be a countably infinite discrete group, $f\in\mathbb{Z}\Gamma $, and let $\alpha _f$ be the principal algebraic $\Gamma $-action on the group $X_f$ with Haar measure $\mu _f$ \textup{(}cf. Definition \ref{d:principal}\textup{)}. For $a\in\mathbb{Z}\Gamma /\mathbb{Z}\Gamma f=\widehat{X_f}$ let $S(a)=\{\gamma \in\Gamma : \gamma \cdot a = a\}$ be its stabilizer.

Then $\alpha _f$ is ergodic with respect to $\mu_f$ if and only if $S(a)$ has infinite index in $\Gamma $ for every nonzero $a\in\mathbb{Z}\Gamma /\mathbb{Z}\Gamma f$.
	\end{coro}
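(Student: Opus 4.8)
The plan is to read the corollary off from Theorem~\ref{t:erg-mix}, the point being simply that the cardinality of a $\Gamma$-orbit equals the index of the corresponding stabilizer. Recall first that $X_f=\ker\rho^f=(\mathbb{Z}\Gamma f)^\perp$ inside $\mathbb{T}^\Gamma=\widehat{\mathbb{Z}\Gamma}$, so by Pontryagin duality its dual group is $\widehat{X_f}=\mathbb{Z}\Gamma/\mathbb{Z}\Gamma f$, as already indicated in the statement. Under this identification the $\mathbb{Z}\Gamma$-module structure on $\widehat{X_f}$ coming from $\alpha_f$ (via $f\cdot a=\hat\alpha_f^{f^*}a$) is the obvious one: for $\gamma\in\Gamma$ the map $a\mapsto\gamma\cdot a=\hat\alpha_f^{\gamma^{-1}}a$ is left multiplication by $\gamma$ on $\mathbb{Z}\Gamma/\mathbb{Z}\Gamma f$, which is meaningful because left multiplication by $\gamma$ preserves the left ideal $\mathbb{Z}\Gamma f$. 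In particular the orbit $\{\hat\alpha_f^\gamma a:\gamma\in\Gamma\}$ appearing in Theorem~\ref{t:erg-mix} coincides, as a subset of $\widehat{X_f}$, with the module orbit $\Gamma\cdot a=\{\gamma\cdot a:\gamma\in\Gamma\}$, since $\gamma\mapsto\gamma^{-1}$ is a bijection of $\Gamma$.

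Next I would check that $S(a)$ is a subgroup and compute the size of the orbit. From $(\gamma\gamma')\cdot a=\gamma\cdot(\gamma'\cdot a)$ and $e\cdot a=a$ one sees immediately that $S(a)=\{\gamma:\gamma\cdot a=a\}$ is closed under multiplication and inversion, hence is a subgroup of $\Gamma$. Moreover $\gamma\cdot a=\gamma'\cdot a$ if and only if $((\gamma')^{-1}\gamma)\cdot a=a$, i.e.\ if and only if $\gamma S(a)=\gamma'S(a)$; thus $\gamma S(a)\mapsto\gamma\cdot a$ is a well-defined bijection of the coset space $\Gamma/S(a)$ onto the orbit $\Gamma\cdot a$. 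Consequently the orbit $\Gamma\cdot a$ is infinite precisely when $[\Gamma:S(a)]=\infty$.

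Combining these two observations with Theorem~\ref{t:erg-mix} then finishes the proof: $\alpha_f$ is ergodic if and only if $\{\hat\alpha_f^\gamma a:\gamma\in\Gamma\}$ is infinite for every nontrivial $a\in\widehat{X_f}$; the nontrivial elements of $\widehat{X_f}$ are exactly the nonzero elements of $\mathbb{Z}\Gamma/\mathbb{Z}\Gamma f$; and for such an $a$ the orbit is infinite if and only if $S(a)$ has infinite index in $\Gamma$.

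There is no genuinely hard step here. The only thing that needs a little care is bookkeeping with the conventions of Section~\ref{s:introduction}: because $\hat\alpha$ is an anti-homomorphism and the module action is defined with an adjoint, one must make sure that the $\Gamma$-orbit referred to in Theorem~\ref{t:erg-mix} is the same subset of $\widehat{X_f}$ as the module orbit $\Gamma\cdot a$ on which $S(a)$ is defined — which it is, since passing from $\gamma$ to $\gamma^{-1}$ leaves the orbit unchanged.
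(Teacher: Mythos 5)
Your argument is correct and is exactly how the paper intends the corollary to be read: it is an immediate consequence of Theorem \ref{t:erg-mix} (the paper gives no separate proof), combining the identification $\widehat{X_f}=\mathbb{Z}\Gamma/\mathbb{Z}\Gamma f$ with the orbit--stabilizer correspondence, and your care about the $\gamma\mapsto\gamma^{-1}$ bookkeeping in the dual action is the right (and only) subtle point.
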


{\renewcommand\qed{}
	\begin{proof}[Proof of Theorem \ref{t:ergodic1}]
Suppose that $f\in\mathbb{Z}\Gamma $ is not a right zero-divisor, but that $\alpha _f$ is nonergodic. By Corollary \ref{c:erg-mix} there exists an $h\in\mathbb{Z}\Gamma $ such that $h\notin \mathbb{Z}\Gamma f$ and the $\Gamma $-orbit $D=\{\gamma h+\mathbb{Z}\Gamma f:\gamma \in \Gamma \}$ of $a=h+\mathbb{Z}\Gamma f$ in $\mathbb{Z}\Gamma /\mathbb{Z}\Gamma f$ is finite. We denote by
	\begin{equation}
	\label{eq:subgroup}
\Delta =\{\delta \in\Gamma :\delta h-h\in\mathbb{Z}\Gamma f\}
	\end{equation}
the stabilizer of $a$, which has finite index in $\Gamma $ by hypothesis, and consider the cocycle $c\colon \Delta \longrightarrow \mathbb{Z}\Gamma $ given by
	\begin{equation}
	\label{eq:stability-cocycle}
h-\delta h=c(\delta )f
	\end{equation}
for every $\delta \in\Delta $ (here we are using that $f$ is not a right zero-divisor). If $\Delta_0 \subset \Delta$ is an infinite subgroup on which $c$ is a coboundary then $c(\delta )=b-\delta b$ for some $b\in \mathbb{Z}\Gamma $ and every $\delta \in \Delta_0 $. Hence $c(\delta )f=(1-\delta )bf=(1-\delta )h$ for every $\delta \in\Delta_0$. Since $\Delta_0 $ is infinite, this implies that $h=bf\in \mathbb{Z}\Gamma f$, contrary to our choice of $h$. In other words, if $c$ is a coboundary when restricted to any infinite subgroup, we run into a contradiction with our assumption that $\alpha _f$ is nonergodic.
	\end{proof}

	\begin{proof}[Proof of (1)]
If $\Gamma_0 \subset \Gamma$ is a finitely generated subgroup with a single end, then the same is true for its finite-index subgroup $\Delta \cap \Gamma_0$ where $\Delta$ is from \eqref{eq:subgroup}. Proposition \ref{p:end} shows that $c$ is a coboundary on $\Delta \cap \Gamma_0$.

As was explained at the beginning of the proof of this theorem this contradicts the non-ergodicity of $\alpha _f$.
	\end{proof}

	\begin{proof}[Proof of (2)]
This is \cite[Theorem 2.4.1]{Hayes}. For convenience of the reader we include the proof. Let $\Gamma _0\subset \Gamma $ be the subgroup generated by $\textrm{supp}(h)\cup \textup{supp}(f)$. Since $\Gamma $ is not finitely generated there exists an increasing sequence of subgroups $\Gamma _n\subset \Gamma ,\,n\ge1$, such that $\Gamma _{n+1}$ is generated over $\Gamma _n$ by a single element $\gamma _{n+1}\in\Gamma _{n+1}\smallsetminus \Gamma _n$. Put $D=\{\gamma h+\mathbb{Z}\Gamma f:\gamma \in\Gamma \}\subset \mathbb{Z}\Gamma /\mathbb{Z}\Gamma f$ and $D_n=\{\gamma h+\mathbb{Z}\Gamma f:\gamma \in\Gamma _n\}\subset \mathbb{Z}\Gamma /\mathbb{Z}\Gamma f,\,n\ge0$. Then $|D_0|\le |D_1|\le \cdots \le |D_n|\le \cdots \le |D|<\infty $. Hence there exists an $N\ge0$ with $\gamma _{N+1}h+\mathbb{Z}\Gamma f=\gamma 'h +\mathbb{Z}\Gamma f$ for some $\gamma '\in \Gamma _N$. Then $(\gamma _{N+1}-\gamma ')h=gf$ for some $g\in\mathbb{Z}\Gamma $. We write $g=g_1+g_2$ with $\textup{supp}(g_1)\subset \Gamma _N$ and $\textup{supp}(g_2)\cap \Gamma _N=\varnothing $. Then
	\begin{equation}
	\label{eq:hayes3}
\gamma _{N+1}h-g_2f = g_1f + \gamma 'h.
	\end{equation}
All the terms on the right hand side of \eqref{eq:hayes3} are supported in $\Gamma _N$, whereas the supports of the terms on the left hand side of \eqref{eq:hayes3} are disjoint from $\Gamma _N$. Hence both sides of \eqref{eq:hayes3} have to vanish, which means that $\gamma _{N+1}h=g_2f$ and $h\in\gamma _{N+1}^{-1}g_2f\in \mathbb{Z}\Gamma f$, contrary to our choice of $h$. As explained above, this contradiction proves the ergodicity of $\alpha _f$.
	\end{proof}}

	\begin{proof}[Proof of (3)]
If $\Gamma_0 \subset \Gamma$ is a nonamenable subgroup with $\beta_1^{(2)}(\Gamma_0) = 0$, then the same is true for its finite-index subgroup $\Delta \cap \Gamma_0$. By Proposition \ref{p:T}, the cocycle $c\colon \Delta \cap \Gamma_0 \longrightarrow \mathbb{Z}\Gamma $ is a coboundary, which leads to a contradiction as in (1).
    \end{proof}

	\begin{proof}[Proof of Theorem \ref{t:ergodic2}]
If $\Gamma $ is amenable, use Theorem \ref{t:ergodic1} (1) or (2). If $\Gamma $ is nonamenable, combine the argument at the beginning of the proof of Theorem \ref{t:ergodic1} with Proposition \ref{p:nonamenable}.
	\end{proof}

\end{document}